\def \phi {\varphi}
\def \RN {\mathbb{R}^N}
\def \R {\mathbb{R}}
\def \G{\Gamma}
\def \vf{\varphi}
\newcommand{\no}{\mathbb N_0}
\newcommand{\sa}{\langle}
\newcommand{\da}{\rangle}
\newcommand{\so}{\mathbb S^{n-1}}
\newcommand{\Rn}{\mathbb R^n}
\newcommand{\p}{\partial}
\newcommand{\la}{\lambda}
\newcommand*\MSC[1][1991]{\par\leavevmode\hbox{%
\textit{\,\,\,\,\, #1 Mathematical subject classification:\ }}}
\newcommand\blfootnote[1]{%
  \begingroup
  \renewcommand\thefootnote{}\footnote{#1}%
  \addtocounter{footnote}{-1}%
  \endgroup}
\numberwithin{equation}{section}
\newcommand{\beq}{\begin{equation}}
\newcommand{\bea}[1]{\begin{array}{#1} }
\newcommand{\eeq}{ \end{equation}}
\newcommand{\ea}{ \end{array}}
\newcommand{\ve}{\varepsilon}
\newcommand{\re}{R(\ve)}
\newtheorem{theorem}{Theorem}[section]
\newtheorem{lemma}[theorem]{Lemma}
\newtheorem{proposition}[theorem]{Proposition}
\newtheorem{corollary}[theorem]{Corollary}
\newtheorem{remark}[theorem]{Remark}
\newtheorem{definition}[theorem]{Definition}
\numberwithin{equation}{section}
\begin{document}

\title[Some inequalities for the Fourier transform, etc.]{Some inequalities for the Fourier transform and their limiting behaviour}

{\blfootnote{\MSC[2020]{42B10, 42B15, 33C10}}}

\keywords{Fourier transform. Riesz Measures. Nonlocal restriction inequality.}

\date{}

\begin{abstract}
We identify a one-parameter family of inequalities for the Fourier transform whose limiting case is the restriction conjecture for the sphere. Using Stein's method of complex interpolation we prove the conjectured inequalities when the target space is $L^2$,  and show that this recovers in the limit the celebrated Tomas-Stein theorem.  
\end{abstract}

\author{Nicola Garofalo}

\address{Dipartimento d'Ingegneria Civile e Ambientale (DICEA)\\ Universit\`a di Padova\\ Via Marzolo, 9 - 35131 Padova,  Italy}
\vskip 0.2in
\email{nicola.garofalo@unipd.it}

\thanks{The author has been supported in part by a Progetto SID (Investimento Strategico di Dipartimento): ``Aspects of nonlocal operators via fine properties of heat kernels", University of Padova, 2022. He has also been partially supported by a Visiting Professorship at the Arizona State University.} 

\dedicatory{In ricordo di E. M. Stein}

\maketitle

\tableofcontents

\section{Introduction}

For any $0<s<1$ and $r>0$ we consider the function
\begin{equation}\label{ar}
A^{(s)}_r(x) =   \frac{c(n,s) r^{2s}}{(|x|^2 - r^2)_+^s |x|^n}, 
\end{equation}
where, using the standard notation $\sigma_{n-1} = \frac{2\pi^{\frac n2}}{\G(\frac n2)}$ for the $(n-1)$-dimensional measure of the unit sphere $\so \subset \Rn$,  we have let  
\begin{equation}\label{norm}
c(n,s) = \frac{2}{\G(s) \G(1-s) \sigma_{n-1}}.
\end{equation}
Define measures on $\Rn$ by letting
\begin{equation}\label{mus}
d\mu^{(s)}_r(x) = A^{(s)}_r(x) dx.
\end{equation}
Since from \eqref{ar} it immediately follows that $A^{(s)}_r \in L^1(\Rn)$, from  \cite[Lemma 15.3]{Gft}) one has for every $s\in (0,1)$ and $r>0$
\begin{equation}\label{L:uno}
\mu^{(s)}_r(\Rn) = ||A^{(s)}_r||_{L^1(\Rn)} = \int_{\Rn} A^{(s)}_r(x) dx = \int_{\Rn} A^{(s)}_1(x) dx = 1.
\end{equation}
We next define the operator
\begin{equation}\label{As}
\mathscr A^{(s)}_r f(x) = A^{(s)}_r \star f(x).
\end{equation}
It follows from \eqref{L:uno} and Young's convolution theorem that  
\[
\mathscr A^{(s)}_r :  L^p(\Rn)\ \longrightarrow\ L^p(\Rn),\ \ \ \ \ \ \ \ 1\le p \le \infty,
\]
and that for any $f \in L^p(\Rn)$
we have 
\begin{equation}\label{p}
||\mathscr A^{(s)}_r  f||_{L^p(\Rn)} \le  ||f||_{L^p(\Rn)},
\end{equation}
which shows that $\mathscr A^{(s)}_r$ is a contraction in $L^p(\Rn)$ for every $0<s<1$ and $r>0$.

\medskip

For any $f\in \mathscr S(\Rn)$ we denote by $\hat f(\xi) = \mathscr F(f)(\xi) = \int_{\Rn} e^{-2\pi i \sa\xi,x\da} f(x) dx$ its Fourier transform, and ask the following

\vskip 0.2in

\noindent \textbf{Question:} Let $\frac 12\le s<1$. If $1\le p < \frac{2n}{n+2s -1}$, is it true that for $1\le q \le \frac{n+1-2s}{n+1} p'$ and for every $f\in \mathscr S(\Rn)$ one has for some $C^{(s)}(n,p)>0$
\begin{equation}\label{restrs}
\left(\int_{\Rn} |\hat f(\xi)|^{q} A^{(s)}_1(\xi) d\xi\right)^{1/q} \le C^{(s)}(n,p)\ ||f||_{L^{p}(\Rn)}.
\end{equation} 

\vskip 0.2in

We note that the restriction $s\ge \frac 12$ serves to guarantee that $\frac{2n}{n+2s -1}\le 2$. Therefore, the hypothesis $f\in L^p(\Rn)$ implies that $f$ be in the Hausdorff-Young range $[1,2]$. As a consequence, $\hat f$ is a function in $L^{p'}(\Rn)$.

Our interest in the above conjecture stems from the following observations. Assume that \eqref{restrs} does hold for any $s$ such that $\frac 12 \le s < 1$. Since $\frac{2n}{n+2s -1} \searrow \frac{2n}{n +1}$ as $s\nearrow 1$, if we take $1\le p < \frac{2n}{n+1}$ and $q\le \frac{n-1}{n+1} p'$, then it is immediate to verify that for any $s\in [\frac 12,1)$
\[
1\le p < \frac{2n}{n+2s -1}\  \ \ \ \text{and}\ \ \ \ 1\le q \le \frac{n+1-2s}{n+1} p'
\]
(for the second of these inequalities simply note that $q\le \frac{n-1}{n+1} p' <  \frac{n-1+ 2(1-s)}{n+1} p' =\frac{n+1-2s}{n+1} p'$), therefore \eqref{restrs} holds. But we now have the following fact implicitly contained in the seminal work \cite{R} of M. Riesz (for a proof see \cite[Prop. 15.4]{Gft}).

\begin{proposition}\label{P:vague}
For every function $f\in \mathscr S(\Rn)$ and $x\in \Rn$, one has 
\[
\underset{s\to 1}{\lim}\ \mathscr A^{(s)}_r f(x) = \mathscr M_r(f,x),
\]
where
\[
\mathscr M_r(f,x) = \frac{1}{\sigma_{n-1} r^{n-1}} \int_{S(x,r)} f(y) d\sigma(y)
\]
is the spherical average of $f$ over the sphere $S(x,r) = \{y\in \Rn\mid|y-x|=r\}$.
\end{proposition}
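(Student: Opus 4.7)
The plan is to reduce the identity to a one-dimensional statement about the weak-$*$ convergence of the measures $(1-s)\,t^{-s}\mathbf 1_{(0,\infty)}(t)\,dt$ to the Dirac mass at $t=0$ as $s\nearrow 1$. To set this up, I would first pass to polar coordinates $y=\rho\omega$ in the convolution \eqref{As}, exploiting the radial symmetry of $A^{(s)}_r$ and the fact that $(|y|^2-r^2)_+^s$ is supported on $\{\rho\ge r\}$. This yields
\[
\mathscr A^{(s)}_r f(x) \;=\; c(n,s)\,\sigma_{n-1}\,r^{2s}\int_r^\infty\frac{\Ma_\rho(f,x)}{(\rho^2-r^2)^s\,\rho}\,d\rho.
\]
The substitution $t=\rho^2-r^2$, using $d\rho/\rho = dt/(2(r^2+t))$, together with Euler's reflection identity $\G(s)\G(1-s)=\pi/\sin(\pi s)$ applied to $c(n,s)$, then converts this into
\[
\mathscr A^{(s)}_r f(x) \;=\; \frac{\sin(\pi s)}{\pi}\,r^{2s}\int_0^\infty\frac{H(t)}{t^s}\,dt,\qquad H(t):=\frac{\Ma_{\sqrt{r^2+t}}(f,x)}{r^2+t}.
\]
For $f\in\So(\Rn)$ the function $H$ is smooth on $[0,\infty)$ and, together with its derivatives, rapidly decreasing; in particular $H(0)=\Ma_r(f,x)/r^2$.

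The main step is to establish $\lim_{s\nearrow 1}(1-s)\int_0^\infty t^{-s}H(t)\,dt = H(0)$. My approach is integration by parts with $u=H(t)$, $dv=t^{-s}\,dt$: the boundary terms vanish (at $0$ since $t^{1-s}\to 0$ for $s<1$, at $\infty$ by the Schwartz decay of $H$), yielding
\[
(1-s)\int_0^\infty t^{-s}H(t)\,dt \;=\; -\int_0^\infty t^{1-s}H'(t)\,dt.
\]
Dominated convergence, with majorant $(1+t)|H'(t)|\in L^1(0,\infty)$ valid uniformly for $s$ in a left neighborhood of $1$, then lets me send $s\to 1$ and obtain $-\int_0^\infty H'(t)\,dt = H(0)$. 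Combining this with $r^{2s}\to r^2$ and $\sin(\pi s)/(\pi(1-s))\to 1$ produces the limit $r^2 H(0)=\Ma_r(f,x)$, as required.

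The only delicate point is this interchange of limit and integral, and that is precisely why I would pass to the integrated-by-parts form first: in the original expression the integrand is not uniformly dominated near $t=0$, whereas after integration by parts the singularity has been absorbed into the explicit factor $(1-s)$, leaving a routine argument. No other obstacle is expected for $f\in\So(\Rn)$; the whole proof is a reparametrization of the classical fact that $\frac{\sin(\pi s)}{\pi}\,t^{-s}\mathbf 1_{(0,\infty)}$ acts as an approximate identity at $t=0$ as $s\nearrow 1$.
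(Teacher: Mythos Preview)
Your argument is correct. The paper does not supply its own proof of this proposition but defers to \cite[Prop.~15.4]{Gft}; the route you take---polar coordinates, the substitution $t=\rho^2-r^2$, then an integration by parts to convert $(1-s)\int_0^\infty t^{-s}H(t)\,dt$ into $-\int_0^\infty t^{1-s}H'(t)\,dt$ so that dominated convergence applies---is the natural one, and every estimate you invoke (smoothness of $H$ on $[0,\infty)$, rapid decay of $H$ and $H'$ for fixed $x$, the uniform majorant $(1+t)|H'(t)|$) is valid for $f\in\So(\Rn)$.
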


Therefore, passing to the limit in \eqref{restrs} and using Proposition \ref{P:vague}, if $C^{(s)}(n,p)$ converges to a number $C(n,p)>0$ as $s\to 1$, we would infer that for $1\le p < \frac{2n}{n+1}$ and $q\le \frac{n-1}{n+1} p'$ the following limiting inequality holds   
\begin{equation}\label{restr}
\left(\frac{1}{\sigma_{n-1}}\int_{\so} |\hat f(\xi)|^q d\sigma(\xi)\right)^{1/q} \le C(n,p)\ ||f||_{L^p(\Rn)}.
\end{equation}
As it is well-known, this is the famous restriction conjecture of C. Fefferman and E. Stein for the Fourier transform, see \cite{Fe}, \cite{Femult}, \cite{Feisr}, \cite{SHA}, \cite{DC} and \cite{Tao}. 

\begin{figure}[H]
	\centering
	\includegraphics[width=130pt]{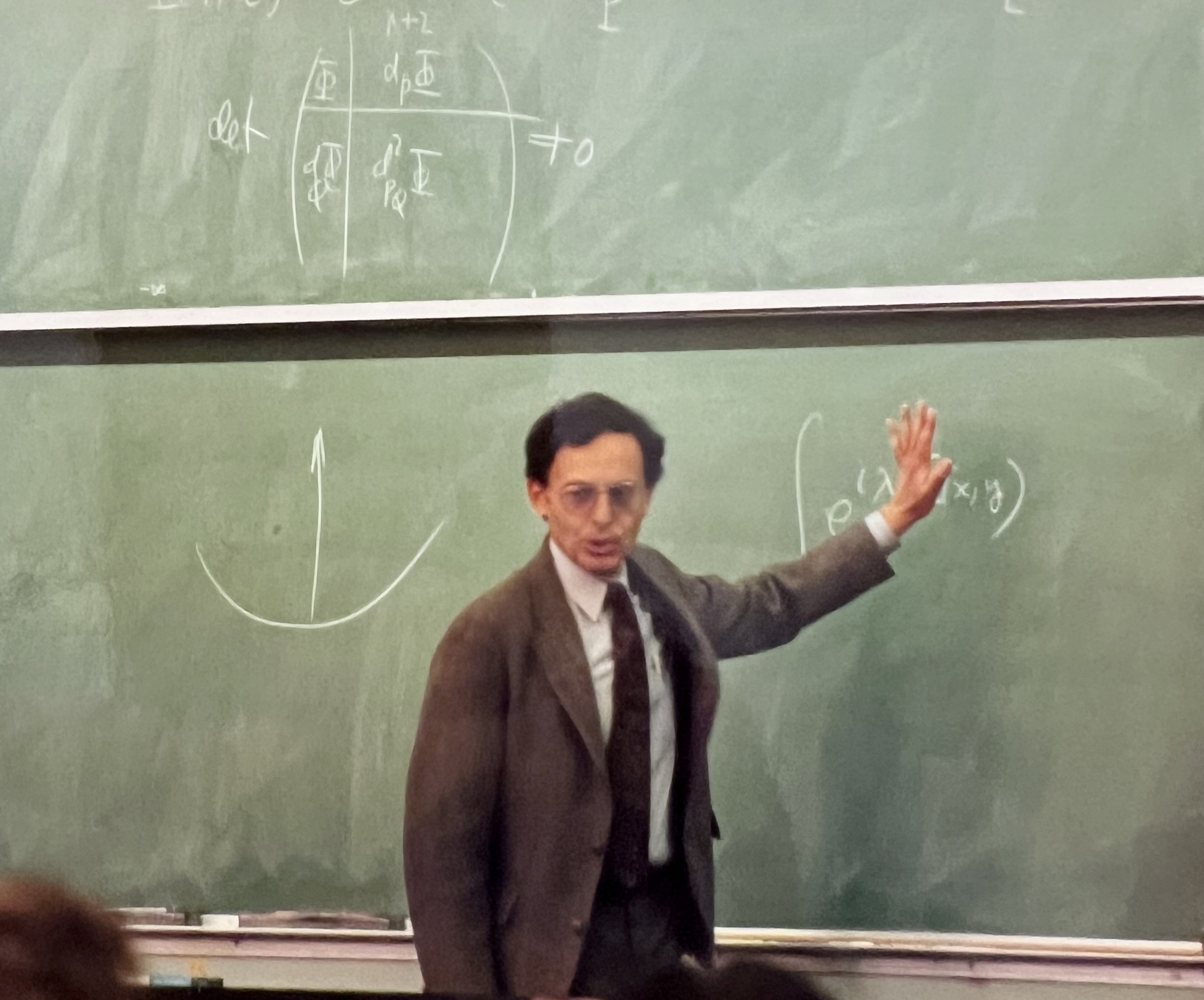}
	\caption*{Eli Stein lecturing on the restriction problem, UChicago, 1985}
	\label{fig:image1}
\end{figure}

One obvious advantage of \eqref{restrs} over \eqref{restr} is that the support of the measures \eqref{mus} is $\Rn\setminus B(0,1)$, instead of the 
lower dimensional manifold $\so\subset \Rn$. Notice that if we let $d\sigma$ denote the surface measure concentrated on the sphere $\so$, then Proposition \ref{P:vague} can be equivalently stated as follows
\begin{equation}\label{limS}
d\mu^{(s)}_1\ \underset{s\to 1}{\longrightarrow}\ \frac{1}{\sigma_{n-1}} d\sigma\ \ \ \ \ \ \text{in}\ \mathscr S'(\Rn).
\end{equation}

Concerning the measures \eqref{mus} we recall that in his above quoted paper M. Riesz developed the theory of the nonlocal operators $(-\Delta)^s$ and their inverses $I_{2s}$, the operators of fractional integration which play a pervasive role in analysis, see also \cite[Ch. 5]{St}. Among other things, he solved by inversion the Dirichlet problem 
\begin{equation}\label{dpnl}
\begin{cases}
(-\Delta)^s u = 0\ \ \  \text{in}\ B_r, 
\\
u = f\ \ \ \ \ \ \ \  \text{in}\ \Rn\setminus B_r,
\end{cases}
\end{equation}
and proved that for every $x\in B_r$ the unique solution to \eqref{dpnl} is provided by
\begin{equation}\label{u}
u(x) = c(n,s) \int_{|y|>r} \left(\frac{r^2 - |x|^2}{|y|^2 - r^2}\right)^s \frac{f(y)}{|y-x|^n}  dy,
\end{equation}
see formula (3) on p. 17 in \cite{R}, but also (1.6.11') and (1.6.2) on pages 122 and 112 in \cite{La}. It is clear from \eqref{u} that $u(0) = \int_{\Rn}  f(y) A^{(s)}_r(y) dy = \mathscr A^{(s)}_r f(0)$. The role of the operators $\mathscr A^{(s)}_r$ is further elucidated by the following result, see \cite[Prop. 15.6]{Gft}.
 
\begin{proposition}[The Blaschke-Privalov fractional Laplacian]\label{P:bps} Let $0<s<1$ and suppose that $f\in \mathscr L_s(\Rn)$ be in $C^{2s+\varepsilon}$ in a  neighborhood of $x\in \Rn$, for some $0<\varepsilon < 1$. One has
\begin{equation}\label{bp}
(-\Delta)^s f(x) = - \gamma(n,s) \underset{r\to 0^+}{\lim} \frac{\mathscr A^{(s)}_r f(x) -f(x)}{r^{2s}},
\end{equation}
where $\gamma(n,s) = \frac{s 2^{2s} \G(\frac n2 + s)}{\pi^{\frac n2} \G(1-s)}$.
\end{proposition}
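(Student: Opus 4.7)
The plan is to reduce $\mathscr A^{(s)}_r f$ to a one-dimensional integral against the spherical means $\mathscr M_\rho(f,x)$, subtract $f(x)$ using the mass-one property \eqref{L:uno} of $\mu^{(s)}_r$, and pass to the limit inside the integral to recognize the hypersingular integral representation of $(-\Delta)^s f(x)$.

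First I would pass to polar coordinates $y=\rho\omega$ with $\rho>r$ and $\omega\in \so$; using $\int_{\so} f(x-\rho\omega)\,d\omega = \sigma_{n-1}\mathscr M_\rho(f,x)$ (by the symmetry $\omega\mapsto -\omega$) I would obtain
\[
\mathscr A^{(s)}_r f(x) = c(n,s)\sigma_{n-1}\, r^{2s}\int_r^\infty \frac{\mathscr M_\rho(f,x)}{(\rho^2-r^2)^s \rho}\,d\rho.
\]
Taking $f\equiv 1$ (or applying \eqref{L:uno}) the substitution $u=r^2/\rho^2$ reduces the prefactor to the Beta identity $B(s,1-s)=\Gamma(s)\Gamma(1-s)$, which is exactly what normalizes $c(n,s)$ in \eqref{norm}. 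Subtracting $f(x)$ and dividing by $r^{2s}$ then yields
\[
\frac{\mathscr A^{(s)}_r f(x)-f(x)}{r^{2s}} = c(n,s)\sigma_{n-1}\int_r^\infty \frac{\mathscr M_\rho(f,x)-f(x)}{(\rho^2-r^2)^s \rho}\,d\rho.
\]

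Next I would pass to the limit as $r\to 0^+$ inside this integral by splitting at $\rho=2r$. On the outer piece, the bound $(\rho^2-r^2)^{-s}\le (3/4)^{-s}\rho^{-2s}$ together with the decay $f\in\mathscr L_s$ furnishes an integrable majorant, and dominated convergence delivers the pointwise limit $\rho^{-2s}$ in the integrand. On the inner piece $[r,2r]$, the $C^{2s+\varepsilon}$-regularity at $x$, combined with the cancellation of the first-order Taylor term in the spherical mean (odd symmetry), gives $|\mathscr M_\rho(f,x)-f(x)|\le C\rho^{2s+\varepsilon}$; after the change of variables $\rho=r(1+t)$ with $t\in[0,1]$ one checks that the contribution is $O(r^\varepsilon)$ and hence vanishes as $r\to 0^+$.

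Finally, combining these estimates and reassembling via the polar decomposition,
\[
\lim_{r\to 0^+}\frac{\mathscr A^{(s)}_r f(x)-f(x)}{r^{2s}} = c(n,s)\sigma_{n-1}\int_0^\infty \frac{\mathscr M_\rho(f,x)-f(x)}{\rho^{2s+1}}\,d\rho = c(n,s)\int_{\Rn}\frac{f(x+y)-f(x)}{|y|^{n+2s}}\,dy,
\]
the final integral being absolutely convergent by the Hölder estimate near the origin and the $\mathscr L_s$-decay at infinity. Comparing with the classical hypersingular representation $(-\Delta)^s f(x) = \gamma(n,s)\,\mathrm{P.V.}\int \frac{f(x)-f(y)}{|x-y|^{n+2s}}\,dy$ and reconciling the Gamma factors via the reflection formula $\Gamma(s)\Gamma(1-s)=\pi/\sin(\pi s)$ yields \eqref{bp}. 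I expect the main obstacle to be the limit on $(r,2r)$, where the weight $(\rho^2-r^2)^{-s}$ blows up; it is precisely the $C^{2s+\varepsilon}$-regularity (together with the vanishing of the first-order term in $\mathscr M_\rho(f,x)-f(x)$) that absorbs this singularity and makes dominated convergence applicable.
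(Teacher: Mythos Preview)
The paper does not prove Proposition \ref{P:bps}; it is quoted from \cite[Prop.~15.6]{Gft} as background, so there is no in-paper argument to compare against. Your approach is the natural one and is essentially correct: rewrite $\mathscr A^{(s)}_r f(x)-f(x)$ via the mass-one identity \eqref{L:uno} as $c(n,s)\int_{|y|>r}\frac{f(x-y)-f(x)}{(|y|^2-r^2)^s|y|^n}\,dy$, and then justify passing to the limit $r\to 0^+$, which produces the hypersingular integral $c(n,s)\int_{\Rn}\frac{f(x-y)-f(x)}{|y|^{n+2s}}\,dy$. Your splitting at $\rho=2r$ is the right device: on $[2r,\infty)$ one has the uniform domination $(\rho^2-r^2)^{-s}\le (3/4)^{-s}\rho^{-2s}$, and the majorant $\rho^{-2s-1}|\mathscr M_\rho(f,x)-f(x)|$ is integrable on $(0,\infty)$ by the $C^{2s+\varepsilon}$ estimate near $0$ and the $\mathscr L_s$ hypothesis at infinity; on $[r,2r]$ the substitution $\rho=r(1+t)$ and the bound $|\mathscr M_\rho(f,x)-f(x)|\le C\rho^{2s+\varepsilon}$ give $O(r^\varepsilon)$, exactly as you say.

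One point you should make explicit rather than leave to ``reconciling Gamma factors'': the constant in front of the hypersingular integral is $c(n,s)$, not $\gamma(n,s)$, so the identity \eqref{bp} amounts to checking that $\gamma(n,s)\,c(n,s)$ equals the normalising constant in the pointwise formula $(-\Delta)^s f(x)=C_{n,s}\,\mathrm{P.V.}\int\frac{f(x)-f(y)}{|x-y|^{n+2s}}\,dy$. This is a genuine computation (not just the reflection formula), and since the paper takes Proposition \ref{P:bps} from \cite{Gft} without specifying its Fourier normalisation of $(-\Delta)^s$, you should either cite the exact constant from that source or verify it independently, e.g.\ by expanding $\widehat{A^{(s)}_r}(\xi)$ from \eqref{cruxxi} to order $r^{2s}$ near $\xi=0$.
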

Here, for $0<s<1$ we have denoted by $\mathscr L_s(\Rn)$ the space of measurable functions $f:\Rn\to \overline \R$ for which the norm
\[
||u||_{\mathscr L_s(\Rn)} = \int_{\Rn} \frac{|f(x)|}{1+|x|^{n+2s}} dx < \infty.
\]

\medskip

Returning to the inequality \eqref{restrs} we mention that, similarly to the restriction problem \eqref{restr}, it  cannot possibly hold for every exponent $p\in [1,2]$ in the Hausdorff-Young range. To understand the constraint $1\le p < \frac{2n}{n+2s -1}$, denote by $T:L^p(\Rn)\to L^q(\Rn,d\mu^{(s)}_1)$ the  ``restriction" operator in \eqref{restrs}. Then its adjoint $T^\star:L^{q'}(\Rn,d\mu^{(s)}_1)\to L^{p'}(\Rn)$ is easily seen to be given by 
\begin{equation}\label{Tstar}
T^\star f(\xi) = \int_{\Rn} e^{-2\pi i \langle\xi,x\rangle} f(x) A^{(s)}_1(\xi) dx, 
\end{equation}
so that
\[
T^\star 1 = \widehat{A^{(s)}_1}.
\]
Assuming \eqref{restrs}, we would thus have by duality  
\[
||T^\star 1||_{L^{p'}(\Rn)} \le B^{(s)}(n,p)\ ||1||_{L^{q'}(\Rn,d\mu^{(s)}_1)} = B^{(s)}(n,p) < \infty,
\] 
where in the last equality we have used  \eqref{L:uno}. Therefore, the validity of \eqref{restrs} for some $p$ implies that $\widehat{A^{(s)}_1}\in L^{p'}(\Rn)$. Now, in Section \ref{S:proof} we prove the following.

\begin{theorem}\label{T:friesz}
Let $s\in (0,1)$ and $n\ge 2$. Then the Fourier transform of the kernel defined by \eqref{ar} is given by      
\begin{equation}\label{ftAs}
\widehat{A^{(s)}_r}(\xi) =  \frac{2^{\frac n2-s} \G(\frac n2)}{\G(s)}\int_{2\pi r |\xi|}^\infty t^{s - \frac n2} J_{\frac n2 - 1 + s}(t) dt.
\end{equation}
\end{theorem}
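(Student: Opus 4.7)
The plan is to verify \eqref{ftAs} by computing the Mellin transforms of both sides, which reduce to products of Gamma functions via standard identities, and then to extract the asserted equality via Mellin inversion.

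First, a scaling argument reduces the claim to $r=1$. A direct inspection of \eqref{ar} gives $A^{(s)}_r(x) = r^{-n} A^{(s)}_1(x/r)$, whence $\widehat{A^{(s)}_r}(\xi) = \widehat{A^{(s)}_1}(r\xi)$; the right-hand side of \eqref{ftAs} likewise depends only on $r|\xi|$. Since $A^{(s)}_1 \in L^1(\Rn)$ is radial and supported in $\Rn\setminus B(0,1)$, I pass to the Hankel representation of its Fourier transform,
\[
\widehat{A^{(s)}_1}(\xi) \;=\; 2\pi\, |\xi|^{1-n/2}\int_1^\infty \! A^{(s)}_1(\rho)\,\rho^{n/2}\, J_{n/2-1}(2\pi\rho|\xi|)\, d\rho.
\]

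The next step is to compute, on a suitable vertical line, the Mellin transforms in the radial variable $k=|\xi|$. For the Mellin transform of $A^{(s)}_1$, the substitution $u=1/\rho^2$ converts the defining integral into a Beta integral and yields, after using $c(n,s)=\Gamma(n/2)/(\pi^{n/2}\Gamma(s)\Gamma(1-s))$,
\[
M[A^{(s)}_1](w) \;=\; \frac{\Gamma(n/2)}{2\pi^{n/2}\Gamma(s)}\cdot \frac{\Gamma((n-w)/2+s)}{\Gamma((n-w)/2+1)}.
\]
Inserting this into the familiar Mellin relation for radial Fourier transforms,
\[
M[\widehat{A^{(s)}_1}](z) \;=\; \pi^{n/2-z}\, \frac{\Gamma(z/2)}{\Gamma((n-z)/2)}\, M[A^{(s)}_1](n-z),
\]
(which follows from the Hankel formula, Fubini, and the Mellin-Bessel identity $\int_0^\infty t^{\mu-1} J_\nu(t)\,dt = 2^{\mu-1}\Gamma((\mu+\nu)/2)/\Gamma((\nu-\mu+2)/2)$), and simplifying via $\Gamma(z/2+1)=(z/2)\Gamma(z/2)$, I obtain
\[
M[\widehat{A^{(s)}_1}](z) \;=\; \frac{\pi^{-z}\,\Gamma(n/2)}{z\,\Gamma(s)}\cdot \frac{\Gamma(z/2+s)}{\Gamma((n-z)/2)}.
\]
Now set $F(k):=\int_{2\pi k}^\infty t^{s-n/2}J_{n/2-1+s}(t)\,dt$; swapping the order of integration in $\int_0^\infty F(k)k^{z-1}dk$ and applying the same Mellin-Bessel identity (with $\mu=s-n/2+z+1$, $\nu=n/2-1+s$) gives
\[
M[F](z) \;=\; \frac{(2\pi)^{-z}}{z}\int_0^\infty t^{s-n/2+z}J_{n/2-1+s}(t)\,dt \;=\; \frac{2^{s-n/2}\,\pi^{-z}}{z}\cdot \frac{\Gamma(z/2+s)}{\Gamma((n-z)/2)}.
\]
The two Mellin transforms differ exactly by the scalar $2^{n/2-s}\Gamma(n/2)/\Gamma(s)$, so Mellin inversion yields \eqref{ftAs} when $r=1$; the scaling observation then delivers the general case.

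The delicate point is to align the various strips of holomorphy and justify the Fubini interchanges and Mellin inversion simultaneously. The Mellin-Bessel identity is valid only on the narrow strip $-\Re(\nu)<\Re(\mu)<1/2$, while $M[A^{(s)}_1](w)$ is holomorphic for $\Re(w)<n+2s$ (the upper bound coming from the singularity of $(\rho^2-1)^{-s}$ at $\rho=1$); one must select an inversion contour $\Re z = \sigma$ that lies in the common strip for all integrals, which the regime $s\in(0,1)$ makes possible, aided by Bessel oscillation at infinity. A direct alternative would be to invoke a Sonine-type integral formula converting $\int_1^\infty (\rho^2-1)^{-s}\rho^{-n/2}J_{n/2-1}(k\rho)\,d\rho$ into $\int_k^\infty t^{s-n/2}J_{n/2-1+s}(t)\,dt$; the virtue of the Mellin route is that it reduces the whole identity to a routine manipulation of Gamma functions.
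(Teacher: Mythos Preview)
Your Mellin-transform argument is correct and gives a genuinely different proof from the one in the paper. The paper proceeds by inserting Bochner's formula and then invoking a specific tabulated identity (formula 2.12.4.16 in Prudnikov--Brychkov--Marichev) that expresses $\int_1^\infty \rho^{\alpha-1}(\rho^2-1)^{\beta-1}J_\nu(c\rho)\,d\rho$ as a combination of two $_1F_2$ hypergeometric functions; with the particular choice of parameters one of these collapses to $1$ (because a numerator parameter vanishes), and the other is rewritten, via a separately proved lemma, as $\int_0^{2\pi r|\xi|} t^{s-n/2}J_{n/2-1+s}(t)\,dt$. Subtracting from the total integral (computed via the Weber--Schafheitlin identity, your ``Mellin--Bessel'' formula) yields the tail integral in \eqref{ftAs}. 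So the paper's route is a two-term hypergeometric decomposition followed by a simplification, whereas yours bypasses hypergeometric functions entirely by matching Gamma-function Mellin symbols on a common strip. Your approach is more systematic and makes transparent \emph{why} the answer has the form it does; the paper's approach is more explicit and avoids any discussion of strips or Fubini, at the cost of relying on a rather special entry from integral tables.

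One technical point to watch in your write-up: the Fubini step in the computation of $M[F](z)$ requires absolute convergence of $\int_0^\infty t^{s-n/2+\Re z}|J_{n/2-1+s}(t)|\,dt$, which holds only for $\Re z<\tfrac{n-1}{2}-s$. When $n=2$ and $s\ge\tfrac12$ this strip is empty. You can cure this either by first integrating by parts once in the definition of $F$ (exactly as in the paper's Corollary~\ref{C:as}) to gain a factor $t^{-1}$ before swapping, or by establishing the identity for $n\ge3$ (or $n=2$, $s<\tfrac12$) and then extending by analytic continuation in $s$, since both sides of \eqref{ftAs} are visibly analytic in $s$ on $(0,1)$ for fixed $\xi\neq0$. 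Either device is routine, but it should be made explicit.
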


Using Theorem \ref{T:friesz}, in \eqref{infty2} of Corollary \ref{C:as} we obtain the following important decay at infinity
\begin{equation}\label{decay}
\widehat{A^{(s)}_1}(\xi) \cong \frac{1}{|\xi|^{\frac{n+1}2 - s}},
\end{equation}
which shows that 
\begin{equation}\label{thick}
\widehat{A^{(s)}_1} \in L^{p'}(\Rn)\ \Longleftrightarrow\ p'> \frac{2n}{n+1-2s}\ \Longleftrightarrow\ 1\le p < \frac{2n}{n+2s -1}.
\end{equation}
We conclude that the inequality \eqref{restrs} cannot possibly hold for $p\ge \frac{2n}{n+2s -1}$.

Before proceeding we pause to comment on an aspect of Theorem \ref{T:friesz}. As the reader will see its proof is somewhat more involved than its well-known counterpart in the case $s=1$. This is due to the \emph{nonlocal} nature of the measure $d\mu^{(s)}_1$, compared to the  surface measure $d\sigma$ of the unit sphere $\so$. To explain this comment we stress that 
\begin{equation}\label{FTsphere}
\widehat{d\sigma}(\xi) = \frac{2\pi}{|\xi|^{\frac n2-1}} J_{\frac n2-1}(2\pi |\xi|)
\end{equation}  
is just a rescaled spherically symmetric eigenfunction of the (local) differential operator $\Delta$ in $\Rn$. To see this, simply observe that for every $\la >0$ the function
\[
f_\la(\xi) = \int_{\so} e^{i \sqrt \la\sa\xi,\omega\da} d\sigma(\omega)
\]  
is a spherically symmetric solution of the Helmholtz equation $\Delta f_\la = - \la f_\la$ in $\Rn$, which shows that \eqref{FTsphere} solves such PDE with $\la = 4\pi^2$. However, the equations \eqref{dpnl}, \eqref{u} above, and Proposition \ref{P:bps} in particular, underscore that the Fourier transform of the measure $d\mu^{(s)}_r$ is instead connected to the pseudodifferential operator $(-\Delta)^s$, and computations with such nonlocal operator are usually more involved. In this regard, we recall the following quote from p. 51 in \cite{La}:...``\emph{In the theory of M. Riesz kernels, the role of the Laplace operator, which has a local character, is taken...by a non-local integral operator...This circumstance often substantially complicates the theory...}". In Lemma \ref{L:limit} we show that for $n\ge 2$ one has for every $\xi\in \Rn$ 
\[
\underset{s\to 1}{\lim}\ \widehat{A^{(s)}_1}(\xi) = \frac{1}{\sigma_{n-1}} \widehat{d\sigma}(\xi). 
\]
This result (which can be seen as a comforting a posteriori confirmation of the correctness of the computations leading to Theorem \ref{T:friesz}) is of course not surprising in view of \eqref{limS} above. It is worthwhile noting at this moment that \eqref{ar} are clearly reminiscent of the classical Bochner-Riesz kernels
\begin{equation}\label{br}
K_z(x) = \frac{\left(1-|x|^2\right)^z_+}{\Gamma(z+1)},\ \ \ \ \Re
z>-1,
\end{equation} 
whose Fourier transform is given by
\begin{equation}\label{FTBR}
\hat
K_z(\xi)=\pi^{-z}|\xi|^{-\left(\frac{n}2+z\right)}J_{\frac{n}2+z}(2\pi|\xi|),\
\ \ \ \xi\in \Rn.
\end{equation}
If we compare this formula with \eqref{FTsphere} it is clear that $\hat K_z\to \frac 12 \  \widehat{d\sigma}$ as $z\to -1$. While the computation of the Fourier transform of $A^{(s)}_1$ is more involved than \eqref{FTBR}, there are advantages in working with \eqref{ar} instead of \eqref{br}. One of them is that, as we have mentioned, $\widehat{A^{(s)}_1}$ is directly connected to the eigenfunctions of the nonlocal operator $(-\Delta)^s$ (to be further analysed in a future work), while this is not the case for \eqref{FTBR}.

\medskip

We have seen that \eqref{restrs} is only possible when $p$ satisfies \eqref{thick}. But, given a $p$ in such range what is the optimal range of $q$'s? To answer this question we use the well-known argument of Knapp, except that because of the presence of the measure $d\mu_1^{(s)}$ in \eqref{restrs}, we need to work a bit more. In Proposition \ref{P:Knapp} below we show that, given $p$ within the range \eqref{thick}, a necessary condition for \eqref{restrs} to hold is that
\begin{equation}\label{nec}
1\le q \le \frac{n+1-2s}{n+1} p'.
\end{equation}

Notice that when the target space is $L^2$, then in view of \eqref{nec} the conjecture asks whether it is true that \eqref{restrs} holds with $q=2$ for any $f\in \mathscr S(\Rn)$ and  
 $2 \le \frac{n+1-2s}{n+1} p'$. This is equivalent to asking that $\frac 1{p'} \le \frac{n+1-2s}{2(n+1)} = \frac 12 - \frac{s}{n+1}$, or equivalently $\frac 1p \ge \frac 12 + \frac{s}{n+1} = \frac{n+1+2s}{2(n+1)}$, and therefore
for any 
\begin{equation}\label{tss}
1\le p \le \frac{2(n+1)}{n+1+2s}.
\end{equation}

In the next result, we prove this conjecture.

\begin{theorem}\label{T:TSs}
For a given $s\in (0,1)$ let $p = \frac{2(n+1)}{n+1+2s}$. Then there exists a constant $C^{(s)}(n)>0$ such that for every $f\in \mathscr S(\Rn)$ one has
\begin{equation}\label{restr2}
\left(\int_{\Rn} |\hat f(\xi)|^2 A^{(s)}_1(\xi) d\xi\right)^{1/2} \le C^{(s)}(n)\ ||f||_{L^p(\Rn)}.
\end{equation} 
\end{theorem}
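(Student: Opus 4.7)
The strategy is Stein's complex interpolation applied to an analytic family extending $A^{(s)}_1$ in the parameter $s$.

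\emph{$TT^*$ reduction.} Let $T:L^p(\Rn)\to L^2(\Rn,d\mu^{(s)}_1)$ be the restriction operator $Tf=\hat f$. A direct calculation shows $T^*Tf=\F^{-1}(A^{(s)}_1\hat f)=f*\F^{-1}(A^{(s)}_1)$, and by the standard identity $\|T\|^2_{L^p\to L^2(d\mu^{(s)}_1)}=\|T^*T\|_{L^p\to L^{p'}}$ it suffices to bound the convolution operator $U_s:=T^*T$ from $L^p(\Rn)$ into $L^{p'}(\Rn)$.

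\emph{Analytic family.} Using the reflection formula $\Gamma(z)\Gamma(1-z)=\pi/\sin(\pi z)$, one extends $A^{(s)}_1$ to the entire family of tempered distributions
\[
A^{(z)}_1(x)=\frac{2\sin(\pi z)}{\pi\sigma_{n-1}}(|x|^2-1)^{-z}_+|x|^{-n},\qquad z\in\mathbb C,
\]
where $(|x|^2-1)^{-z}_+$ is interpreted via the classical analytic continuation of powers of nonnegative polynomials; the zeros of $\sin(\pi z)$ at positive integers cancel the poles of $(|x|^2-1)^{-z}_+$. This yields an analytic family of Fourier multipliers $U_z f:=\F^{-1}(A^{(z)}_1\hat f)$ on the strip $\{0\le\Re z\le(n+1)/2\}$, reducing to $U_s$ at $z=s$.

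\emph{Endpoint estimates.} On $\Re z=0$, with $z=it$, one has $|(|x|^2-1)^{-it}_+|=\mathbf 1_{|x|>1}$ and $|x|^{-n}\le1$ on $\{|x|>1\}$, hence $\|A^{(it)}_1\|_{L^\infty}\lesssim|\sinh\pi t|\lesssim e^{\pi|t|}$, so Plancherel delivers $\|U_{it}\|_{L^2\to L^2}\lesssim e^{\pi|t|}$. On $\Re z=(n+1)/2$, Young's inequality reduces the desired $L^1\to L^\infty$ bound to showing $\widehat{A^{(z)}_1}\in L^\infty(\Rn)$ uniformly in $\xi\in\Rn$ with admissible (exponential) growth in $|\Im z|$. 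This second endpoint is the main obstacle: by analytic continuation of \eqref{ftAs} we have
\[
\widehat{A^{(z)}_1}(\xi)=\frac{2^{n/2-z}\Gamma(n/2)}{\Gamma(z)}\int_{2\pi|\xi|}^\infty t^{z-n/2}J_{n/2-1+z}(t)\,dt,
\]
and at $\Re z=(n+1)/2$ the integrand has modulus $\sim 1$ for large $t$ (the borderline case of \eqref{decay}), so the integral is only conditionally convergent. Its uniform boundedness in $\xi$ must be extracted by exploiting the oscillation of $J_\nu$, for instance via repeated integration by parts or the Poisson integral representation of the Bessel function, while carefully tracking the dependence on $|\Im z|$ that arises from the complex order of $J_{n/2-1+z}$ and the factor $1/\Gamma(z)$.

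\emph{Interpolation.} With both endpoint estimates in hand, Stein's complex interpolation theorem applied to the analytic family $\{U_z\}$ yields at the interior point $z=s$, with $\theta=2s/(n+1)\in(0,1)$,
\[
\frac{1}{p}=\frac{1-\theta}{2}+\theta=\frac{n+1+2s}{2(n+1)},\qquad \frac{1}{p'}=\frac{1-\theta}{2}=\frac{n+1-2s}{2(n+1)},
\]
so that $\|U_s\|_{L^p\to L^{p'}}\lesssim 1$, which by the $TT^*$ equivalence proves \eqref{restr2}.
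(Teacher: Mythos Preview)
Your strategy is essentially the paper's: the $TT^*$/Plancherel reduction to the convolution operator $f\mapsto \widehat{A^{(s)}_1}\star f$, the embedding into an analytic family in the parameter $z$ (your $U_z$ is the paper's $T_{1-z}$), and Stein interpolation between the $L^2\to L^2$ endpoint (multiplier bounded by $\sinh$) and the $L^1\to L^\infty$ endpoint (Fourier transform of the kernel bounded).

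The gap is that you have not actually established the $L^1\to L^\infty$ endpoint; you correctly flag it as the main obstacle but stop at ``for instance via repeated integration by parts or the Poisson integral representation.'' This is precisely where the work lies. At $\Re z=(n+1)/2$ the raw representation \eqref{ftAs} has an integrand of modulus $\sim 1$ and only conditional convergence, so a single appeal to oscillation is not enough to get a bound uniform in $\xi$ with admissible growth in $|\Im z|$. The paper first integrates by parts once to obtain the alternative representation (Corollary~\ref{C:as}, formula \eqref{infty}), analytically continues that to define the kernel $K_z$ in \eqref{Kz}, and then on the critical line splits into $|\xi|$ small and $|\xi|$ large: for small $|\xi|$ it uses the full integral via Lemma~\ref{L:EH} and the bound \eqref{near}; for large $|\xi|$ it inserts the Hankel asymptotic \eqref{jnuinfty} and integrates by parts once more against $t^{-1-iy}$ to gain the needed decay. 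Throughout one must track $|\Im z|$ through $1/|\Gamma(\cdot)|$ via \eqref{gammiy} and Legendre duplication, arriving at the growth $e^{\frac{3\pi}{2}|y|}$ in \eqref{K00}. Your outline is on the right track, but without these steps the proof is incomplete.
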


In order to establish Theorem \ref{T:TSs}
we exploit Plancherel as in \cite{To} and reduce matters to proving that the operator $R^{(s)} f = \widehat{A^{(s)}_1} \star f$ maps $L^p(\Rn)\to L^{p'}(\Rn)$. This is ultimately achieved using Stein's theorem of complex interpolation by embedding $R^{(s)}$ into an analytic family of operators $\{T_z\}_{z\in S}$ in the strip $S = \{z\in \mathbb C\mid - \frac{n-1}2\le \Re z\le 1\}$. Specifically, we show that 
\[
\begin{cases}
T_{-\frac{n-1}2 + i y}: L^1(\Rn)\ \longrightarrow\ L^\infty(\Rn),
\\
T_{1 + i y}: L^2(\Rn)\ \longrightarrow\ L^2(\Rn),
\end{cases}
\]
with appropriate bounds on the operator norms.   
Since the constant $C^{(s)}(n)$ in \eqref{restr2} is bounded uniformly in $s\in (0,1)$, see \eqref{Mss} below, passing to the limit as $s\to 1$ we recover the  Tomas-Stein restriction theorem, see \cite{To}.

\begin{remark}\label{R:4}
One should observe that the threshold exponent $p = \frac{2(n+1)}{n+1+2s}$ in \eqref{tss} is strictly less than $2$ for any $0<s<1$. Therefore, the limitation $\frac 12 \le s <1$ is not necessary in such case. 
\end{remark}

The plan of the paper is as follows. In Section \ref{S:nec} we adapt the well-known  argument of Knapp to prove that, if for a given $p$ in the range \eqref{thick} the restriction inequality \eqref{restrs} does hold, then we must have $1\le q \le \frac{n+1-2s}{n+1} p'$. Section \ref{S:proof} is devoted to proving Theorem \ref{T:friesz}, from which we obtain Corollary \ref{C:as}. The representation formula \eqref{infty} contained in it will be quite important in the remainder of the paper. Finally in Section \ref{S:TSs} we prove the nonlocal restriction Theorem \ref{T:TSs}. As a corollary of this result we obtain the celebrated Tomas-Stein theorem. The paper closes with an appendix in Section \ref{S:app} in which we gather some well-known facts and collect some  results needed in the rest of the paper. 

\medskip

\noindent \textbf{Acknowledgment:} I thank Carlos Kenig and Agnid Banerjee for their gracious preliminary reading of the manuscript and their feedback.  


\section{Necessary condition for restriction}\label{S:nec}

In \eqref{thick} above we have seen that the inequality \eqref{restrs} can possibly hold only when $1\le p < \frac{2n}{n+2s -1}$. In this section we adapt the well-known  argument of Knapp to prove that, if for a given $p$ in such range  \eqref{restrs} does hold, then we must have $1\le q \le \frac{n+1-2s}{n+1} p'$.

\begin{proposition}\label{P:Knapp}
A necessary condition for \eqref{restrs} to hold for $1\le p < \frac{2n}{n+2s -1}$ is that 
$1\le q \le \frac{n+1-2s}{n+1} p'$.
\end{proposition}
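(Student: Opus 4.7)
The plan is to adapt the classical Knapp example to the weighted setting, choosing a Schwartz test function whose Fourier transform is localized on a curved cap placed just \emph{outside} the unit sphere $\so$, where the density $A^{(s)}_1$ is singular. For $\delta \in (0,1/10)$ small, introduce the cap
\[
\mathcal C_\delta = \{\xi = (\xi', \xi_n) \in \Rn : |\xi'| < \delta,\ 1 < \xi_n < 1+\delta^2\},
\]
sitting just outside the north pole of $\so$. Writing $|\xi|^2 - 1 = |\xi'|^2 + (\xi_n-1)(\xi_n+1)$, a direct check gives $|\xi|^2 - 1 \lesssim \delta^2$ and $|\xi| \asymp 1$ on $\mathcal C_\delta$, whence \eqref{ar} forces $A^{(s)}_1 \gtrsim \delta^{-2s}$ there. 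Combined with $|\mathcal C_\delta| \asymp \delta^{n+1}$, this yields the key lower bound $\mu^{(s)}_1(\mathcal C_\delta) \gtrsim \delta^{n+1-2s}$.

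For the test function, fix $\phi \in \So(\Rn)$ with $\hat\phi$ real-valued, non-negative, and bounded below by $1$ on $[-1,1]^{n-1} \times [-1/2, 1/2]$. With $\xi_0 = (0, \ldots, 0, 1+\delta^2/2)$, set
\[
f_\delta(x) = e^{2\pi i \sa x, \xi_0\da}\, \phi(\delta x_1, \ldots, \delta x_{n-1}, \delta^2 x_n).
\]
A standard rescaling gives
\[
\hat f_\delta(\xi) = \delta^{-(n+1)}\, \hat\phi\bigl(\delta^{-1}(\xi'-\xi_0'),\ \delta^{-2}(\xi_n - \xi_{0,n})\bigr),
\]
so by construction $\hat f_\delta \ge \delta^{-(n+1)}$ on $\mathcal C_\delta$, while a direct computation yields $\|f_\delta\|_{L^p(\Rn)} = \delta^{-(n+1)/p}\|\phi\|_{L^p(\Rn)}$.

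Inserting $f_\delta$ into the conjectured inequality \eqref{restrs} and combining the two estimates above gives $\delta^{-(n+1)}\,\mu^{(s)}_1(\mathcal C_\delta)^{1/q} \lesssim \delta^{-(n+1)/p}$, which together with the cap bound simplifies further to $\delta^{(n+1-2s)/q} \lesssim \delta^{(n+1)/p'}$. Letting $\delta \to 0^+$ and matching the exponents of $\delta$ forces $(n+1-2s)/q \ge (n+1)/p'$, i.e., $q \le \frac{n+1-2s}{n+1} p'$; the lower bound $q \ge 1$ is intrinsic to the left-hand norm of \eqref{restrs}. The point that requires care is the placement of the cap: since $\mathrm{supp}\,d\mu^{(s)}_1 = \Rn\setminus B(0,1)$, the cap must lie strictly \emph{outside} $\so$, and its radial thickness must be of order $\delta^2$ (matching the curvature of $\so$) rather than $\delta$. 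This parabolic scaling is precisely what couples the density singularity $\delta^{-2s}$ with the volume $\delta^{n+1}$ to produce the threshold exponent $\frac{n+1-2s}{n+1}$.
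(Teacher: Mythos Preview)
Your argument is correct and is the same Knapp-type construction as the paper's, with the same parabolic scaling that produces the threshold exponent $\frac{n+1-2s}{n+1}$. Two tactical choices, however, make your version cleaner than the paper's. First, the paper takes $\hat f_\ve = \mathbf 1_{K_\ve}$, the indicator of a box $K_\ve = [-R(\ve),R(\ve)]^{n-1}\times[1-\ve,1]$ straddling the sphere; since the support of $d\mu^{(s)}_1$ is $\Rn\setminus B(0,1)$, this forces the paper into a somewhat laborious computation of $\mu^{(s)}_1(K_\ve\cap(\Rn\setminus B(0,1)))$ via sandwiching between cylinders and differentiating in $\ve$. By placing your cap $\mathcal C_\delta$ entirely outside $\so$ with radial thickness $\delta^2$, you reduce the measure estimate to the one-line observation $A^{(s)}_1\gtrsim\delta^{-2s}$ on a set of volume $\asymp\delta^{n+1}$. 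Second, your test function is genuinely Schwartz, so it matches the hypothesis of \eqref{restrs} directly, whereas the paper's $f_\ve$ is only in $L^p$ for $p>1$ and requires an (unwritten) density argument. The trade-off is that the paper gets matching upper and lower bounds $\mu^{(s)}_1(K_\ve\cap(\Rn\setminus B(0,1)))\asymp\ve^{\frac{n+1}{2}-s}$, while you only need (and only prove) the lower bound; for the necessity direction this is all that is required.
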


\begin{proof}
For $\ve>0$ small consider the parallelepiped $K_\ve = K'_\ve\times[1-\ve,1]$ with sides parallel to the coordinate axis whose projection onto $\R^{n-1}\times\{0\}$ is the $(n-1)$-dimensional cube $K'_\ve$ circumscribing the intersection of the hyperplane $x_n = 1-\ve$ with the unit sphere $\so$. If $\theta_\ve$ is the angle of aperture of the right circular cone obtained by projecting to the origin the points of the $(n-2)$-dimensional sphere obtained intersecting $\so\cap \{x_n=1-\ve\}$, from elementary trigonometry we have $\cos \theta_\ve = 1-\ve$, $\sin \theta_\ve = R(\ve) = \sqrt{\ve(2-\ve)}$, and therefore $K'_\ve = [-R(\ve),R(\ve)]^{n-1}$.

Denoting now with $B'(0,r) = \{x'\in \R^{n-1}\mid |x'|<r\}$, consider now the right-circular cylinders $C_\ve = B'(0,R(\ve))\times [1-\ve,1]$ and $C^\star_\ve = B'(0,\sqrt{n-1}\ R(\ve))\times [1-\ve,1]$.  A moment's thought reveals that 
\begin{equation}\label{mt}
C_\ve \cap (\Rn\setminus B(0,1)) \subset K_\ve \cap (\Rn\setminus B(0,1)) \subset C^\star_\ve \cap (\Rn\setminus B(0,1)).
\end{equation}
We note that
\[
C_\ve \cap (\Rn\setminus B(0,1)) = \{(x',x_n)\in \Rn\mid 1-\ve\le x_n \le 1,\ \sqrt{1-x_n^2}\le |x'|\le R(\ve)\}.
\]
As in Knapp's argument, if $\mathbf 1_E$ is the indicator function of a set $E\subset \Rn$, we now consider the function $f_\ve = \mathscr F^{-1}(\mathbf 1_{K_\ve})$, so that $\hat f_\ve = \mathbf 1_{K_\ve}$. As it is well-known
\[
f_\ve(\xi) = \hat{\mathbf 1}_{K_\ve}(\xi) = e^{i\pi (2-\ve)\xi_n}\ \frac{\sin(\pi \ve\xi_n)}{\pi \xi_n} \prod_{j=1}^{n-1} \frac{\sin(2\pi R(\ve)\xi_j)}{\pi \xi_j},
\]
and therefore $f_\ve\in L^p(\Rn)$ for any $p>1$ and moreover
\begin{equation}\label{knapp}
||f||_{L^p(\Rn)} \cong \ve^{\frac{n+1}{2p'}}.
\end{equation}
Next, we want to understand the asymptotic behaviour as $\ve\to 0^+$ of the quantity
\begin{equation}\label{knapps}
\left(\int_{\Rn} |\hat f_\ve(x)|^q A^{(s)}_1(x) dx \right)^{1/q} =  \left(\int_{K_\ve \cap (\Rn\setminus B(0,1))}  A^{(s)}_1(x) dx\right)^{1/q}.
\end{equation}
In view of the inclusions \eqref{mt} it suffices to understand the asymptotic behaviour of the right-hand side of \eqref{knapps} when the integral is performed on the set $C_\ve \cap (\Rn\setminus B(0,1))$. With this objective in mind, we obtain from Cavalieri's principle

\begin{align*}
& \left(\int_{C_\ve \cap (\Rn\setminus B(0,1))}  A^{(s)}_1(x) dx\right)^{1/q} 
 = \left(\int_{1-\ve}^1 \int_{\sqrt{1-t^2}\le |x'|\le R(\ve)} A^{(s)}_1(x',t) dx' dt\right)^{1/q}
\\
& = \left(\int_{1-\ve}^1 \int_{\sqrt{1-t^2}\le |x'|\le R(\ve)} \frac{dx'}{(|x'|^2 - (1-t^2))^{s} (|x'|^2 + t^2)^{\frac n2}} dt\right)^{1/q}
\\
& \cong \left(\int_{1-\ve}^1 \int_{\sqrt{1-t^2}\le |x'|\le R(\ve)} \frac{dx'}{(|x'|^2 - (1-t^2))^{s}} dt\right)^{1/q}
\\
& \cong \left(\int_{1-\ve}^1 \int_{\sqrt{1-t^2}}^{R(\ve)} \frac{\rho^{n-2} d\rho}{(\rho^2 - (1-t^2))^{s}} dt\right)^{1/q}.
\end{align*}
We now want to show that as $\ve\to 0^+$ we have
\begin{equation}\label{G}
G(\ve) = \int_{1-\ve}^1 \int_{\sqrt{1-t^2}}^{R(\ve)} \frac{\rho^{n-2} d\rho}{(\rho^2 - (1-t^2))^{s}} dt \cong \ve^{\frac{n+1}2-s}.
\end{equation} 
To see this we write $G(\ve) = \int_{1-\ve}^1 F(\ve,t) dt$, where 
\[
F(\ve,t) = \int_{\sqrt{1-t^2}}^{R(\ve)} \frac{\rho^{n-2} d\rho}{(\rho^2 - (1-t^2))^{s}}.
\]
The chain rule gives
\[
G'(\ve) = F(\ve,1-\ve) + \int_{1-\ve}^1 \frac{\p F}{\p \ve}(\ve,t) dt = \int_{1-\ve}^1 \frac{\p F}{\p \ve}(\ve,t) dt,
\]
since $F(\ve,1-\ve) = 0$. A simple computation gives 
\[
\frac{\p F}{\p \ve}(\ve,t) = \frac{R'(\ve) R(\ve)^{n-2}}{(R(\ve)^2 - (1-t^2))^{s}},
\]
therefore
\begin{align*}
G'(\ve) & = R'(\ve) R(\ve)^{n-2} \int_{1-\ve}^1 \frac{dt}{(R(\ve)^2 - (1-t^2))^{s}}  =  R'(\ve) R(\ve)^{n-2} \int_{1-\ve}^1 \frac{1}{(t^2 - (1-\ve)^2)^{s}} dt
\\
& \cong R'(\ve) R(\ve)^{n-2} \int_{1-\ve}^1 \frac{dt}{(t - (1-\ve))^{s}} \cong \ve^{1-s} R'(\ve) R(\ve)^{n-2}.
\end{align*}
Since $R(\ve)^{n-2} = (\ve(2-\ve))^{n-2} \cong \ve^{\frac n2 -1}$, and $R'(\ve) \cong \ve^{-\frac 12}$, we infer that  $G(\ve) \cong \ve^{\frac{n+1}2-s}$, which gives the desired conclusion \eqref{G}. In conclusion, we have shown that
\begin{equation}\label{boom}
\left(\int_{\Rn} |\hat f_\ve(\xi)|^q A^{(s)}_1(\xi) d\xi\right)^{1/q} \cong \ve^{(\frac{n+1}2-s)\frac 1q}.
\end{equation}
Combining \eqref{knapp} with \eqref{boom} we finally infer that a necessary condition for \eqref{restrs} to hold is
\[
(\frac{n+1}2-s)\frac 1q \ge \frac{n+1}{2p'}\ \Longleftrightarrow\ 1\le q \le \frac{n+1-2s}{n+1} p'.
\]

\end{proof}


\section{The Fourier transform of the kernel $A^{(s)}_1$}\label{S:proof}

In this section we prove Theorem \ref{T:friesz}. Using Lemmas \ref{L:critical} and \ref{L:hyper12} in the Appendix, we establish a result which provides a key step in the proof of Theorem \ref{T:friesz}.

\begin{lemma}\label{L:crux}
For every $0<s<1$, $r>0$ and $\xi\in \Rn\setminus\{0\}$ one has
\begin{align*}
& \int_1^\infty \rho^{-\frac n2} (\rho^2 - 1)^{-s}  J_{\frac n2 -1}(2\pi r |\xi|\rho)\ d\rho 
\\ 
& = \G(1-s)(2\pi r|\xi|)^{\frac n2 - 1} \left\{\frac{ \G(s)}{2^{\frac n2} \G(\frac n2)}- \frac{1}{2^s\G(s)} \int_0^{2\pi r |\xi|} t^{s - \frac n2} J_{\frac n2 - 1 + s}(t) dt\right\}.
\end{align*}
\end{lemma}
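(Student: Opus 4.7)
The plan is to compute the LHS via a Mellin--Barnes representation of the Bessel function, which reduces the $\rho$-integration to an Euler Beta integral; residue calculus then produces the two terms on the RHS. Writing $\tau := 2\pi r|\xi|$, I would start from
\[
J_{\frac{n}{2}-1}(\tau\rho) = \frac{1}{2\pi i}\int_L \frac{\G(-w)}{\G(w+\frac{n}{2})}\left(\frac{\tau\rho}{2}\right)^{2w+\frac{n}{2}-1}dw,
\]
valid on a vertical contour $L=\{\Re w = c\}$ with $-\tfrac{n}{4}<c<0$. Inserting this and exchanging the order of integration (justified by Stirling decay of $\G(-w)$ along $L$), the inner $\rho$-integral becomes, via the substitution $v=1/\rho^2$,
\[
\int_1^\infty \rho^{2w-1}(\rho^2-1)^{-s}\,d\rho = \tfrac{1}{2}B(s-w,1-s) = \frac{\G(s-w)\G(1-s)}{2\G(1-w)},\qquad \Re w < s,
\]
so that the LHS equals $\tfrac{\G(1-s)}{2}\cdot\tfrac{1}{2\pi i}\int_L \tfrac{\G(-w)\G(s-w)}{\G(w+\frac{n}{2})\G(1-w)}(\tau/2)^{2w+\frac{n}{2}-1}\,dw$.

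Next I would close the contour to the right and sum residues. Among the poles of $\G(-w)$ at $w=k\ge 0$, all with $k\ge 1$ are killed by the zero of $1/\G(1-w)$, so only $w=0$ contributes, producing the $\tau$-free piece $\G(s)\G(1-s)(\tau/2)^{\frac{n}{2}-1}/(2\G(\frac{n}{2}))=\G(1-s)\tau^{\frac{n}{2}-1}\G(s)/(2^{\frac{n}{2}}\G(\frac{n}{2}))$, which is precisely the first term in the claim. The poles of $\G(s-w)$ at $w=s+k$, $k\ge 0$, yield a power series in $\tau^2$; after applying the Gamma identity $\G(-s-k)/\G(1-s-k)=-1/(s+k)$ and factoring $\tau^{\frac{n}{2}-1}$, the resulting series matches the Taylor expansion of the incomplete integral on the RHS, as one sees by plugging in $J_{\frac{n}{2}-1+s}(t)=\sum_k (-1)^k(t/2)^{2k+\frac{n}{2}-1+s}/[k!\,\G(k+\frac{n}{2}+s)]$ and integrating termwise. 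This final identification of series is exactly what Lemma~\ref{L:hyper12} packages as a ${}_1F_2$-hypergeometric identity, while Lemma~\ref{L:critical} supplies the Weber--Schafheitlin evaluation $\int_0^\infty t^{s-\frac{n}{2}}J_{\frac{n}{2}-1+s}(t)\,dt = 2^{s-\frac{n}{2}}\G(s)/\G(\frac{n}{2})$ that governs the large-$\tau$ asymptotics and cross-checks the constants.

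The hard part will be the careful bookkeeping of the Gamma-function prefactors: the precise powers of $2$ and the interplay of $\G(s)$, $\G(1-s)$, and $\G(\frac{n}{2})$ must line up so that the expected decay $I(\tau)\to 0$ as $\tau\to\infty$ emerges from a clean cancellation between the constant first term and the Weber--Schafheitlin value of the full integral. A secondary, essentially standard, technical point is the rigorous justification of the contour shift to the right, which needs the Stirling-type decay of the Gamma-ratio on large semicircles, combined with the reflection formula applied to $\G(-w)\G(s-w)$, to ensure the semicircular tails vanish and no additional residues are picked up.
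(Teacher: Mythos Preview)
Your Mellin--Barnes approach is essentially sound and, interestingly, amounts to a from-scratch derivation of Lemma~\ref{L:critical} rather than an application of it. You have, however, misread what the two lemmas say. Lemma~\ref{L:critical} is \emph{not} the Weber--Schafheitlin evaluation (that is Lemma~\ref{L:EH}); it is a tabulated identity from Prudnikov--Brychkov--Marichev that expresses precisely your left-hand side, $\int_1^\infty \rho^{\alpha-1}(\rho^2-1)^{\beta-1}J_\nu(c\rho)\,d\rho$, as a sum of two ${}_1F_2$ hypergeometric functions. The paper's proof simply specialises that formula with $\nu=\tfrac{n}{2}-1$, $\alpha=1-\tfrac{n}{2}$, $\beta=1-s$, $c=2\pi r|\xi|$: one ${}_1F_2$ collapses to $1$ because its first numerator parameter is $0$, and the other is converted via Lemma~\ref{L:hyper12} (termwise integration of the Bessel series, exactly as you describe) into the incomplete integral $\int_0^{2\pi r|\xi|}t^{s-n/2}J_{n/2-1+s}(t)\,dt$. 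So the paper's argument is a two-line substitution into a known special-function identity, whereas your contour computation re-proves that identity by picking up the residues at $w=0$ and $w=s+k$; the two routes are equivalent, with the paper's being shorter and yours more self-contained.

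On the technical side, the contour shift all the way to the right needs more care than ``essentially standard'' suggests. The factor $(\tau/2)^{2w}$ \emph{grows} as $\Re w\to+\infty$ whenever $\tau>2$, so Stirling decay of the Gamma ratio does not by itself kill the arcs for large $\tau$. The usual remedy is to prove the identity first for small $\tau$ (where $(\tau/2)^{2R}\to 0$ dominates) and then extend to all $\tau>0$ by analytic continuation, both sides being entire in $\tau$; this should be stated explicitly. A related minor point: the bare Mellin--Barnes integral for $J_{n/2-1}$ on a vertical line $\Re w=c<0$ converges only for $c>-(n-2)/4$, which leaves no room when $n=2$; you can avoid this either by working directly with the post-Fubini integrand (the extra factor $\Gamma(s-w)/\Gamma(1-w)$ improves the decay) or by using a Hankel loop instead of a straight line.
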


\begin{proof}

To prove Lemma \ref{L:crux} we use Lemma \ref{L:critical} in which we  take
\begin{equation}\label{par}
\nu = \frac n2 - 1,\ \ \ \ \ \alpha =  -\frac n2 + 1 = - \nu,\ \ \ \ \ \beta = 1-s,\ \ \ \ c = 2 \pi r |\xi|.
\end{equation}
Since $s<1$, the condition $\Re \beta>0$ is guaranteed. Also, $\alpha+2\beta<\frac 72$ is equivalent to $\frac n2 + 2s > - \frac 12$, which is trivially satisfied. Furthermore, we have
\[
\frac{\alpha + \nu}2 = 0,\ \ \ \ \ \nu+ 1 = \frac n2,\ \ \ \ \ \ \frac{\alpha+\nu}2 + \beta = 1-s,\ \ \ \ 1-\beta-\frac{\alpha+\nu}2 = s.
\]
We thus have
\begin{align*}
& \frac{c^\nu\ \G(\beta) \G(1-\beta-\frac{\alpha+\nu}2)}{2^{\nu+1} \ \G(\nu+1) \G(1-\frac{\alpha+\nu}2)} \ _1F_2(\frac{\alpha+\nu}2; \nu+1,\frac{\alpha+\nu}2 + \beta; - (\frac c2)^2)
\\
& = (2\pi r|\xi|)^{\frac n2 - 1}  \frac{\G(1-s) \G(s)}{2^{\frac n2} \G(\frac n2)} \ _1F_2(0; \frac n2,1-s; - (\frac{2\pi|\xi|}2)^2) = (2\pi r|\xi|)^{\frac n2 - 1}  \frac{\G(1-s) \G(s)}{2^{\frac n2} \G(\frac n2)},
\end{align*}
since from \eqref{zeroF} we have $_1F_2(0; \frac n2,1-s; - (\frac{2\pi|\xi|}2)^2) \equiv 1$. We next want to write in a more convenient form the second hypergeometric function in the right-hand side of \eqref{magic} below. With the above choices \eqref{par}, we now have
\[
1-\beta = s,\ \ \ \ \  2-\beta - \frac{\alpha+\nu}2 = s+1,\ \ \ \ \ \ 2-\beta+ \frac{\nu-\alpha}2 = s+1 + \frac n2 - 1 = \frac n2 + s,
\]
and also
\[
\alpha+2\beta-3 = - \frac n2 - 2s,\ \ \ \ \ 2-\alpha-2\beta = \frac n2 - 1 + 2s,\ \ \ \ \ \beta + \frac{\alpha+\nu}2 - 1 = - s.
\]
This gives
\begin{align*}
& \frac{2^{\alpha+2\beta-3} c^{2-\alpha-2\beta} \G(\beta+ \frac{\alpha+\nu}2 -1)}{\G(2-\beta+\frac{\nu-\alpha}2)}  \ _1F_2(1-\beta; 2-\beta - \frac{\alpha+\nu}2,2-\beta+ \frac{\nu-\alpha}2; - (\frac c2)^2)
\\
& = \frac{2^{- \frac n2 - 2s} (2\pi r|\xi|)^{\frac n2 - 1 + 2s} \G(-s)}{\G(\frac n2 + s)} \ _1F_2(s; s+1,\frac n2 + s; - (\frac{2\pi r|\xi|}2)^2)
\\
& = - \frac{2^{- \frac n2 - 2s}(2\pi r|\xi|)^{\frac n2 - 1 + 2s} \G(1-s)}{s \G(\frac n2 + s)} \ _1F_2(s; s+1,\frac n2 + s; - (\frac{2\pi r|\xi|}2)^2)
\end{align*}
If we now apply Lemma \ref{L:hyper12} with 
\[
a = 2\pi r |\xi|,\ \ \ \ c = 1,\ \ \ \ \nu = \frac n2 - 1 + s,\ \ \ \ \frac{\alpha+\nu}2 = s\ \Longrightarrow\ \alpha = s - \frac n2 + 1,
\]
we obtain
\begin{align*}
&  _1F_2(s; s+1,\frac n2 + s; - (\frac{2\pi r|\xi|}2)^2) = 
\frac{2s 2^{\frac n2 - 1 + s}\G(\frac n2 + s)}{(2\pi r |\xi|)^{2 s}} \int_0^{2\pi r |\xi|} t^{s - \frac n2} J_{\frac n2 - 1 + s}(t) dt.
\end{align*}
Substituting in the above, and putting everything together, we find
\begin{align*}
& \int_1^\infty \rho^{-\frac n2} (\rho^2 - 1)^{-s}  J_{\frac n2 -1}(2\pi r |\xi|\rho)\ d\rho
= 
\\
& \G(1-s)(2\pi r|\xi|)^{\frac n2 - 1} \left\{\frac{ \G(s)}{2^{\frac n2} \G(\frac n2)}- \frac{1}{2^s} \int_0^{2\pi r |\xi|} t^{s - \frac n2} J_{\frac n2 - 1 + s}(t) dt\right\},
\end{align*}
which finally gives the desired conclusion.

\end{proof}

\begin{proof}[Proof of Theorem \ref{T:friesz}] We begin by observing that the left-hand side of \eqref{ftAs} coincides with the right-hand side when $\xi = 0$. For this, note that on one hand \eqref{L:uno} gives 
\[
\widehat{A^{(s)}_r}(0) = ||A^{(s)}_r||_{L^1(\Rn)} = 1.
\]
On the other, we apply Lemma \ref{L:EH} with 
\[
a=1,\ \ \ \  \ \mu = s - \frac n2,\ \ \ \ \   \nu = \frac n2 + s - 1.
\]
In  such case, we have $\mu < \frac 12$ and also $\mu + \nu = s - \frac n2 + \frac n2 + s - 1 = 2s - 1 > -1$, since $s>0$. Since $\frac{\nu-\mu+1}2 = \frac n2$, we thus obtain
\begin{equation}\label{EH}
\int_0^{\infty} t^{s - \frac n2} J_{\frac n2 - 1 + s}(t) dt = 2^{s-\frac n2} \frac{\G(s)}{\G(\frac n2)}.
\end{equation}
This shows that when $\xi = 0$ the right-hand side of \eqref{ftAs} becomes
\begin{equation}\label{EH0}
\frac{2^{\frac n2-s} \G(\frac n2)}{\G(s)}\int_{0}^\infty t^{s - \frac n2} J_{\frac n2 - 1 + s}(t) dt = 1,
\end{equation}
and therefore \eqref{ftAs} does hold in $\xi = 0$.

Let now $\xi\not= 0$ and recall the well-known  formula of Bochner for the Fourier transform of a spherically symmetric function
\begin{equation}\label{FB} 
\hat f(\xi) = \frac{2\pi}{|\xi|^{\frac n2-1}} \int_0^\infty r^{\frac n2} f^\star(r) J(2\pi r|\xi|) dr,
\end{equation}
see \cite[Theor. 40 on p. 69]{BC}.
Applying \eqref{FB} to \eqref{ar}, after a simple change of variable  we find
\begin{align}\label{crux}
\widehat{A^{(s)}_r}(\xi)  & = c(n,s) \frac{2\pi}{r^{\frac n2 - 1} |\xi|^{\frac n2 -1}}  \int_1^\infty \rho^{-\frac n2} (\rho^2 - 1)^{-s}  J_{\frac n2 -1}(2\pi r |\xi|\rho)\ d\rho.
\end{align}
At this point we substitute Lemma \ref{L:crux} in \eqref{crux}, obtaining
\begin{align}\label{cruxxi}
& \widehat{A^{(s)}_r}(\xi)
=   (2\pi)^{\frac n2}\ c(n,s)\G(1-s) \left\{\frac{ \G(s)}{2^{\frac n2} \G(\frac n2)}- \frac{1}{2^s} \int_0^{2\pi r |\xi|} t^{s - \frac n2} J_{\frac n2 - 1 + s}(t) dt\right\}
\\
& = \frac{\sigma_{n-1}}2 c(n,s)\ \G(1-s)\G(s) \left\{1 - \frac{2^{\frac n2-s} \G(\frac n2)}{\G(s)} \int_0^{2\pi r |\xi|} t^{s - \frac n2} J_{\frac n2 - 1 + s}(t) dt\right\}
\notag
\\
& = \frac{\sigma_{n-1}}2 c(n,s)\ \G(1-s)\G(s) \frac{2^{\frac n2-s} \G(\frac n2)}{\G(s)} \left\{\int_0^{\infty} t^{s - \frac n2} J_{\frac n2 - 1 + s}(t) dt - \int_0^{2\pi r |\xi|} t^{s - \frac n2} J_{\frac n2 - 1 + s}(t) dt\right\},
\notag
\end{align}
where in the last equality we have used \eqref{EH0}. Using \eqref{norm} in \eqref{cruxxi}, we finally obtain for every $0<s<1$ and $r>0$ 
\begin{equation}\label{friesz}
\widehat{A^{(s)}_r}(\xi) = \frac{2^{\frac n2-s} \G(\frac n2)}{\G(s)}\int_{2\pi r |\xi|}^\infty t^{s - \frac n2} J_{\frac n2 - 1 + s}(t) dt.
\end{equation}
This completes the proof of Theorem \ref{T:friesz}.

\end{proof}

For subsequent purposes, it will be important to have the following alternative representation of $\widehat{A^{(s)}_r}$.
\begin{corollary}\label{C:as}
Let $0<s<1$. For every $\xi\in \Rn$ we have
\begin{align}\label{infty}
\widehat{A^{(s)}_r}(\xi) & =  \frac{2^{\frac n2-s} \G(\frac n2)}{\G(s)}\left\{n \int_{2\pi r |\xi|}^\infty t^{s - \frac n2-1} J_{\frac n2 + s}(t) dt -\frac{1}{(2\pi r|\xi|)^{\frac n2 -s}} J_{\frac n2 + s}(2\pi r|\xi|)\right\}.
\end{align}
The identity \eqref{infty} implies, in particular, the existence of a universal $C(n,s)>0$, such that as $|\xi|\to \infty$
\begin{equation}\label{infty2}
\left|\widehat{A^{(s)}_1}(\xi)\right| \le \frac{C(n,s)}{|\xi|^{\frac{n+1}2 -s}}.
\end{equation}
\end{corollary}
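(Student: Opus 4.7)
The proof plan is to derive \eqref{infty} by a single integration by parts on the integral in Theorem \ref{T:friesz}, and then to read off the decay \eqref{infty2} from the standard large-argument asymptotics of Bessel functions applied to both resulting terms.

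For the formula \eqref{infty}, the starting point is the classical Bessel identity
\[
\frac{d}{dt}\bigl[t^{\nu} J_{\nu}(t)\bigr] = t^{\nu} J_{\nu-1}(t),
\]
which, taken with $\nu = \frac n2 + s$, lets me rewrite the integrand in Theorem \ref{T:friesz} as
\[
t^{s-\frac n2}\, J_{\frac n2 - 1 + s}(t) \;=\; t^{-n}\,\frac{d}{dt}\!\left[t^{\frac n2 + s} J_{\frac n2 + s}(t)\right].
\]
I then integrate by parts on $[2\pi r|\xi|, \infty)$. The boundary term at infinity vanishes: indeed, the standard asymptotics $J_{\nu}(t) = O(t^{-1/2})$ as $t\to\infty$ give $t^{s-\frac n2} J_{\frac n2 + s}(t) = O(t^{s-\frac{n+1}{2}})$, which tends to $0$ because $s<1\le \frac{n+1}{2}$ for $n\ge 2$. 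The boundary contribution at $t=2\pi r|\xi|$ produces the term $-\tfrac{1}{(2\pi r|\xi|)^{\frac n2 - s}}J_{\frac n2 + s}(2\pi r|\xi|)$, while differentiating $t^{-n}$ produces $n\int_{2\pi r|\xi|}^\infty t^{s-\frac n2-1} J_{\frac n2 + s}(t)\,dt$. Reassembling with the prefactor from Theorem \ref{T:friesz} gives \eqref{infty} exactly.

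For the decay estimate \eqref{infty2}, I plug $r=1$ into \eqref{infty} and estimate each of the two terms separately using $|J_{\nu}(t)|\le C\,t^{-1/2}$ for $t\ge 1$ (which holds universally for fixed $\nu$). The boundary term satisfies
\[
\left|\frac{1}{(2\pi|\xi|)^{\frac n2 - s}}\,J_{\frac n2 + s}(2\pi|\xi|)\right| \le \frac{C}{|\xi|^{\frac n2 - s + \frac 12}} = \frac{C}{|\xi|^{\frac{n+1}{2}-s}}.
\]
For the remaining integral, the same Bessel bound yields
\[
\left|\int_{2\pi |\xi|}^{\infty} t^{s-\frac n2 -1} J_{\frac n2 + s}(t)\,dt\right| \le C\int_{2\pi|\xi|}^{\infty} t^{s-\frac n2 - \frac 32}\,dt \;=\; \frac{C'}{|\xi|^{\frac n2 + \frac 12 - s}} = \frac{C'}{|\xi|^{\frac{n+1}{2}-s}},
\]
where convergence at infinity is guaranteed by $s - \frac n2 - \frac 12 < 0$, which again follows from $s<1$ and $n\ge 2$. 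Adding the two bounds and absorbing the constants into $C(n,s)$ yields \eqref{infty2}.

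There is no genuine obstacle here; the only care required is in two places, and both are routine. First, one must check that the boundary contribution at infinity in the integration by parts actually vanishes (which is where the restriction $n\ge 2$ enters, via $s-\frac{n+1}{2}<0$). Second, one has to verify that the crude absolute-value bound on $J_{\frac n2 + s}$ inside the residual integral is enough to match the sharp decay rate $|\xi|^{-(\frac{n+1}{2}-s)}$; this is reassuring, since a sharper estimate via further integration by parts is available but unnecessary for \eqref{infty2}.
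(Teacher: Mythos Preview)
Your proposal is correct and follows essentially the same approach as the paper: both use the recursion $\frac{d}{dt}[t^{\nu}J_\nu(t)]=t^{\nu}J_{\nu-1}(t)$ with $\nu=\frac n2+s$ to rewrite the integrand as $t^{-n}\frac{d}{dt}[t^{\frac n2+s}J_{\frac n2+s}(t)]$, integrate by parts, and then deduce \eqref{infty2} by applying the $O(t^{-1/2})$ bound for $J_{\frac n2+s}$ to each of the two resulting terms. If anything, you fill in details (the vanishing of the boundary term at infinity) that the paper leaves to the reader.
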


\begin{proof}
The proof of \eqref{infty} follows from \eqref{friesz} by applying the recursive formula \eqref{rec} with $\nu = \frac n2 + s$ and integrating by parts. One has in fact
\begin{align*}
& \int_{2\pi r |\xi|}^\infty t^{s - \frac n2} J_{\frac n2 - 1 + s}(t) dt = \int_{2\pi r |\xi|}^\infty t^{- n} \frac{d}{dt}\left(t^{\frac n2+s} J_{\frac n2+ s}(t)\right) dt.
\end{align*}
Details are left to the interested reader, who should notice that, since $s>0$, we now have $s - \frac n2-1+ \frac n2 + s = 2s-1>-1$, and therefore the oscillatory integral $\int_{0}^\infty t^{s - \frac n2-1} J_{\frac n2 + s}(t) dt$ is convergent near $t=0$ (and can in fact be explicitly computed via Lemma \ref{L:EH}). To prove \eqref{infty2} it is enough to observe that when $|\xi|$ is sufficiently large, then by \eqref{jnuinfty2} we have for some universal constant $C>0$ and all $t \ge 2\pi |\xi|$,
\[
|J_{\frac n2  + s}(t)|\le \frac{C}{t^{1/2}}.
\]
We thus find for the first term in the right-hand side of \eqref{infty}
\[
\left|\int_{2\pi  |\xi|}^\infty t^{s - \frac n2-1} J_{\frac n2 + s}(t) dt\right| \le C \int_{2\pi  |\xi|}^\infty t^{s - \frac n2-\frac 32} dt = \frac{2C}{(n+1-2s)(2\pi |\xi|)^{\frac{n+1}2-s}}. 
\]
Since the second term can obviously be estimated in the same way, we are finished. 

\end{proof}

The next result provides the limiting value of $\widehat{A^{(s)}_1}(\xi)$ as $s\to 1$. It represents the counterpart on the Fourier transform side of Proposition \ref{P:vague}.

\begin{lemma}\label{L:limit}
Let $n\ge2$. Then for every $\xi\in \Rn$ one has 
\[
\underset{s\to 1}{\lim}\ \widehat{A^{(s)}_1}(\xi) = \frac{1}{\sigma_{n-1}} \widehat{d\sigma}(\xi) 
\]
\end{lemma}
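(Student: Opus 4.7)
The natural strategy is to pass to the limit $s\to 1$ directly in the alternative representation \eqref{infty} of Corollary \ref{C:as} and then use standard Bessel recurrences to identify the result with \eqref{FTsphere}. The case $\xi=0$ is handled separately: \eqref{L:uno} gives $\widehat{A^{(s)}_1}(0)=1$ for every $s$, while $\sigma_{n-1}^{-1}\widehat{d\sigma}(0)=1$, so attention can be restricted to $\xi\neq 0$.

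Writing \eqref{infty} with $r=1$, the prefactor $\frac{2^{n/2-s}\G(n/2)}{\G(s)}$ tends to $2^{n/2-1}\G(n/2)$ as $s\to 1$, and the boundary term
$(2\pi|\xi|)^{-(n/2-s)}J_{n/2+s}(2\pi|\xi|)$
converges by continuity (in $\nu$) of the Bessel functions to $(2\pi|\xi|)^{-(n/2-1)}J_{n/2+1}(2\pi|\xi|)$. The real task is to take the limit inside the integral
$\int_{2\pi|\xi|}^{\infty} t^{s-n/2-1}J_{n/2+s}(t)\,dt$.
I would justify this by dominated convergence: for $s$ restricted to a small interval $[1-\delta,1+\delta]$ with $\delta<1/2$, the bound \eqref{jnuinfty2} yields $|J_{n/2+s}(t)|\le C t^{-1/2}$ uniformly in such $s$, so the integrand is dominated by $C\,t^{\delta - n/2 - 3/2}$, which is integrable on $[2\pi|\xi|,\infty)$ as soon as $n\ge 2$. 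Since $|\xi|>0$, there is no issue at the lower endpoint. Thus the integral converges to $\int_{2\pi|\xi|}^{\infty}t^{-n/2}J_{n/2+1}(t)\,dt$.

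The limiting integral is then evaluated in closed form by the standard identity $\frac{d}{dt}\bigl(t^{-\nu}J_\nu(t)\bigr) = -t^{-\nu}J_{\nu+1}(t)$ applied with $\nu=n/2$, together with $t^{-n/2}J_{n/2}(t)\to 0$ as $t\to\infty$. This gives
$\int_{2\pi|\xi|}^{\infty} t^{-n/2}J_{n/2+1}(t)\,dt = (2\pi|\xi|)^{-n/2}J_{n/2}(2\pi|\xi|)$.
Substituting back and factoring $(2\pi|\xi|)^{-(n/2-1)}$, the limit becomes
\[
\lim_{s\to 1}\widehat{A^{(s)}_1}(\xi)
= 2^{n/2-1}\G(n/2)\,\frac{1}{(2\pi|\xi|)^{n/2-1}}\left[\frac{n}{2\pi|\xi|}J_{n/2}(2\pi|\xi|) - J_{n/2+1}(2\pi|\xi|)\right].
\]

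The expression in brackets is exactly $J_{n/2-1}(2\pi|\xi|)$, by the three-term recurrence $\frac{2\nu}{z}J_\nu(z) = J_{\nu-1}(z)+J_{\nu+1}(z)$ with $\nu=n/2$ and $z=2\pi|\xi|$. Hence
\[
\lim_{s\to 1}\widehat{A^{(s)}_1}(\xi) = \frac{2^{n/2-1}\G(n/2)}{(2\pi|\xi|)^{n/2-1}}\,J_{n/2-1}(2\pi|\xi|)
= \frac{\G(n/2)}{\pi^{n/2-1}|\xi|^{n/2-1}}\,J_{n/2-1}(2\pi|\xi|),
\]
which, using $\sigma_{n-1}=2\pi^{n/2}/\G(n/2)$ and \eqref{FTsphere}, is precisely $\sigma_{n-1}^{-1}\widehat{d\sigma}(\xi)$. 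The hypothesis $n\ge 2$ enters only in the dominated convergence step; the algebraic collapse via the Bessel recurrence is the main conceptual step and it is a clean identity rather than a delicate estimate.
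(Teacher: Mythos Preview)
Your proof is correct and follows essentially the same route as the paper's: both pass to the limit in the representation \eqref{infty} via dominated convergence, evaluate the limiting integral using \eqref{rec2}, and identify the result with \eqref{FTsphere}. One minor slip: your dominating exponent should read $\delta - \tfrac{n}{2} - \tfrac{1}{2}$ (or simply $-\tfrac{n+1}{2}$, since in fact $s<1$) rather than $\delta - \tfrac{n}{2} - \tfrac{3}{2}$, but this does not affect integrability for $n\ge 2$; you also make explicit the use of the recurrence \eqref{rec3}, which the paper leaves implicit.
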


\begin{proof}

Notice that for any $\xi\not= 0$ and $t>2\pi |\xi|$ we have from \eqref{jnuinfty} 
\[
|t^{s - \frac n2-1} J_{\frac n2 + s}(t)| \le C(n,\xi)\ t^{- \frac{n+1}2} \in L^1(2\pi|\xi|,\infty),
\]
for some constant $C(n,\xi)>0$ independent of $s\in (0,1)$. By Lebesgue dominated convergence we thus have 
\[
\underset{s\to 1}{\lim} \int_{2\pi  |\xi|}^\infty t^{s - \frac n2-1} J_{\frac n2 + s}(t) dt = \int_{2\pi  |\xi|}^\infty t^{- \frac n2} J_{\frac n2 + 1}(t) dt = - \int_{2\pi  |\xi|}^\infty \frac{d}{dt}\left(t^{- \frac n2} J_{\frac n2}(t)\right) dt.
\]
This observation and \eqref{infty} give
\[
\underset{s\to 1}{\lim}\ \widehat{A^{(s)}_1}(\xi)  =  2^{\frac n2-1} \G(\frac n2) \frac{1}{(2\pi |\xi|)^{\frac n2 -1}} J_{\frac n2-1}(2\pi |\xi|).
\]
In view of \eqref{FTsphere} above, we have reached the desired conclusion when $\xi\not= 0$. When instead $\xi = 0$ for the left-hand side of \eqref{ftAs} we obtain from \eqref{L:uno}
\[
\underset{s\to 1}{\lim}\ \widehat{A^{(s)}_1}(0) = \underset{s\to 1}{\lim}\ ||A^{(s)}_1||_{L^1(\Rn)}= 1.
\]
Convergence to the same limit of the right-hand side follows from \eqref{EH0}.
 
\end{proof}


\section{Proof of Theorem \ref{T:TSs}}\label{S:TSs}

In this section we prove Theorem \ref{T:TSs}. The proof will be based on two central ideas: 1) To exploit the $L^2$ nature of the inequality \eqref{restr2} via the Plancherel theorem. This reduces considerations to proving that the  nonlocal Tomas-Stein operator $R^{(s)}$ in \eqref{TSs} below maps $L^p$ into $L^{p'}$; 2) To accomplish this step, we embed $R^{(s)}$ into an analytic family of operators $T_z$. For the latter we show that   
\[
\begin{cases}
T_{-\frac{n-1}2 + i y}: L^1(\Rn)\ \longrightarrow\ L^\infty(\Rn)
\\
\\
T_{1 + i y}: L^2(\Rn)\ \longrightarrow\ L^2(\Rn),
\end{cases}
\]
with appropriate bounds on the operator norms.

\begin{proof}[Proof of Theorem \ref{T:TSs}]
Similarly to \cite{To} we write for $f\in \mathscr S(\Rn)$
\begin{align*}
& \int_{\Rn} |\hat f(\xi)|^2 A^{(s)}_1(\xi) d\xi = \int_{\Rn} \overline{\hat f(\xi)} \hat f(\xi) A^{(s)}_1(\xi) d\xi
\\
& = \int_{\Rn} \overline{\hat f(\xi)}\ \hat f(\xi) A^{(s)}_1(\xi) d\xi =  \int_{\Rn} \overline{\hat f(\xi)}\ \widehat{R^{(s)} f}(\xi) d\xi,
\end{align*}
where we have defined 
\begin{equation}\label{TSs}
R^{(s)} f(\xi) = \widehat{A^{(s)}_1} \star f(\xi),
\end{equation}
so that $\widehat{R^{(s)} f}(\xi) = \hat f(\xi) A^{(s)}_1(\xi)$.
By Plancherel we thus obtain 
\begin{align*}
& \int_{\Rn} |\hat f(\xi)|^2 A^{(s)}_1(\xi) d\xi = \int_{\Rn} \overline{f(x)}\ R^{(s)} f(x) dx
\\
& \le ||f||_{L^p(\Rn)} ||R^{(s)} f||_{L^{p'}(\Rn)},
\end{align*}
where in the last inequality we have used H\"older. The proof will be finished if we can show that for every $0<s<1$ there exists $M_s = M_s(n)>0$ such that for $f\in \mathscr S(\Rn)$ one has
\begin{equation}\label{punchline}
||R^{(s)} f||_{L^{p'}(\Rn)} \le M_s\ ||f||_{L^p(\Rn)},
\end{equation}
for $p = \frac{2(n+1)}{n+1+2s}$. We want to accomplish \eqref{punchline} by interpolating between the two endpoints $L^1\to L^\infty$ and $L^2\to L^2$. This means we have to choose $\theta\in [0,1]$ such that
\[
\frac{n+1+2s}{2(n+1)} = \frac 1p = \frac{1-\theta}1 + \frac{\theta}2 = 1 - \frac{\theta}2,
\]
which gives 
\[
\frac{\theta}2 = \frac{1}{p'} = \frac{n+1-2s}{2(n+1)},
\]
and therefore
\begin{equation}\label{theta}
\theta = \theta(s) = \frac{n+1-2s}{n+1} = 1 - \frac{2s}{n+1},\ \ \ \ \ \ 1-\theta = \frac{2s}{n+1}.
\end{equation}
For $z\in \mathbb C$ such that $\Re z\le 1$ we define a linear operator $T_z:\mathscr S(\Rn)\to \mathbb C$ by letting
\begin{equation}\label{Tz}
\widehat{T_z f}(\xi) = A^{(1-z)}_1(\xi) \hat f(\xi),
\end{equation}
where for $\Re z \le 1$ we have let
\begin{equation}\label{az}
A^{(z)}_1(x) =   \frac{c(n,z)}{(|x|^2 - 1)_+^z |x|^n}, 
\end{equation}
with $c(n,z) = \frac{2}{\sigma_{n-1}\G(z)\G(1-z)}$. Notice that $c(n,z) = c(n,1-z)$ and that for $y\in \R$ we have 
\begin{equation}\label{igamma}
||A^{(i y)}_1||_{L^\infty(\Rn)} \le \frac{2}{\sigma_{n-1}|\G(i y)||\G(1-i y)|}.
\end{equation}
From Plancherel theorem, \eqref{Tz}, \eqref{igamma} and \eqref{gammiy} 
we conclude for some universal $C(n)>0$ and for every $y\in \R$
\begin{equation}\label{L2}
||T_{1+i y} f||_{L^2(\Rn)} \le M_1(y)\ ||f||_{L^2(\Rn)},
\end{equation}
with $M_1(y) = C(n)\ e^{\pi |y|}$.

We now introduce the kernels
\begin{equation}\label{Kz}
K_z(\xi) = \frac{2^{\frac n2-1+z} \G(\frac n2)}{\G(1-z)}\bigg\{n \int_{2\pi  |\xi|}^\infty t^{-z - \frac n2} J_{\frac n2 + 1-z}(t) dt -\frac{1}{(2\pi |\xi|)^{\frac n2 -1+z}} J_{\frac n2 + 1-z}(2\pi |\xi|)\bigg\}.
\end{equation}
Notice that, according to \eqref{infty} in Corollary \ref{C:as}, when $z=1-s$ we have $K_{1-s} = \widehat{A^{(s)}_1}$, and therefore \eqref{Tz} gives
\begin{equation}\label{Rs}
T_{1-s} f(\xi) = \widehat{A^{(s)}_1} \star f(\xi) = R^{(s)} f(\xi),
\end{equation}
where in the last equality we have used \eqref{TSs}. Also notice that, since by analytic continuation \eqref{infty} continues to be valid for any $z\in \mathbb C$ in the strip $0<\Re z < 1$, for any such $z$ we have from \eqref{Tz}
\begin{equation}\label{Tzz}
T_z f = K_z \star f.
\end{equation}
Since by \eqref{Kz} the kernel $K_z$ defines an analytic function of $z$ for $-\frac{n+1}2 < \Re z<1$ (see Lemma \ref{L:EH}), we can use \eqref{Tzz} to analytically extend the operator $T_z$ to the whole strip $S = \{z\in \mathbb C\mid -\frac{n-1}2<\Re z<1\}$. If we let $z = x + i y$, then we define 
\[
S_{x+iy} = T_{-\frac{n-1}2 + \frac{n+1}2 x+i y}.
\]
Note that $S_z$ is now defined on the strip $\Sigma =  \{z\in \mathbb C\mid 0<\Re z < 1\}$, and that \eqref{L2} now reads
\begin{equation}\label{L22}
||S_{1+i y} f||_{L^2(\Rn)} \le C(n)\ \sinh(\pi |y|)\ ||f||_{L^2(\Rn)}.
\end{equation}  
Since $S_{i y} = T_{-\frac{n-1}2 +i y}$, we next analyse the behaviour of $T_z$ on the line the line $L_0 = \{z\in \mathbb C\mid \Re z = - \frac{n-1}2\}$. Notice that \eqref{theta} gives
\[
(1-\theta)(-\frac{n-1}2) + \theta \cdot 1 = 1-s.
\]
Note that the equation $-\frac{n-1}2 + \frac{n+1}2 x = (1-s)$ gives 
\[
x = \frac{(1-s) + \frac{n-1}2}{\frac{n+1}2} = \frac{n+1-2s}{n+1} = 1 - \frac{2s}{n+1} = \theta,
\]
see \eqref{theta}. This shows that $S_\theta = T_{1-s}$.
In view of \eqref{L2} we conclude that, if we can show that $T_{-\frac{n-1}2 + i y} :L^1(\Rn) \to L^\infty(\Rn)$, with appropriate bounds on the operator norms, then by Stein's theorem of complex interpolation for an analytic family of operators, see \cite{Stein56} or \cite[Theor. 4.1, p. 205]{SW}, it will follow that $T_{1-s} : L^p(\Rn) \to L^{p'}(\Rn)$, as desired. 

In order to show that $T_{-\frac{n-1}2 + i y} :L^1(\Rn) \to L^\infty(\Rn)$ we will prove that $K_{-\frac{n-1}2 + i y}\in L^\infty(\Rn)$ and that moreover for some universal constant $C>0$ depending only on $n$, one has 
\begin{equation}\label{K00}
||K_{-\frac{n-1}2 + i y}||_{L^\infty(\Rn)} \le C e^{\frac{3\pi}2 |y|},\ \ \ \ \ \ \ \ y\in \R.
\end{equation}
From \eqref{Kz} we obtain
\begin{equation}\label{K0}
K_{-\frac{n-1}2 + i y}(\xi) = \frac{2^{-\frac{1}2 + i y} \G(\frac n2)}{\G(\frac{n+1}2 -i y)}\bigg\{n \int_{2\pi  |\xi|}^\infty t^{-\frac{1}2 - i y} J_{n+\frac 12 - i y}(t) dt -\frac{1}{(2\pi |\xi|)^{-\frac{1}2 + i y}} J_{n +\frac{1}2 - i y}(2\pi |\xi|)\bigg\}.
\end{equation}
Note that from \eqref{jnuinfty2} there exists a universal $R = R(n)>0$ such that when $|\xi| \ge R$ one has 
\begin{equation}\label{away}
\left|\frac{1}{(2\pi |\xi|)^{-\frac{1}2 + i y}} J_{n +\frac{1}2 - i y}(2\pi |\xi|)\right|\le 1.
\end{equation}
On the other hand, for $|\xi|\le R$ we have from \eqref{near}
\begin{align}\label{nearnow}
& \left|\frac{1}{(2\pi |\xi|)^{-\frac{1}2 + i y}} J_{n +\frac{1}2 - i y}(2\pi |\xi|)\right| \le \frac{\G(n+1)}{|\G(n+1-i y)|\ \G(n + \frac 12+1)} \frac{\left(2\pi |\xi|\right)^{n+1}}{2^{n+\frac 12}}
\\
& \le \frac{\sqrt 2 \pi^{n+\frac 12} \G(n) R^{n+1}}{\G(n+\frac 12)\ |\G(n+1-i y)|}.
\notag
\end{align} 
We now have for $n\ge 2$
\begin{align*}
|\G(n+1-i y)| & = |(n-i y)||\G(n-i y)|= |(n-i y)||(n-1-i y)|...|(1-i y)||\G(1-i y)|
\\
& \ge (1+y^2)^{\frac n2} |\G(1-i y)| = (1+y^2)^{\frac n2} \sqrt{\frac{\pi |y|}{\sinh \pi |y|}},
\end{align*}
where in the last equality we have used \eqref{gammiy}.
This gives
\begin{equation}\label{invgamma}
\frac{1}{|\G(n+1-i y)|}\le (1+y^2)^{-\frac n2} \sqrt{\frac{\sinh \pi |y|}{\pi |y|}}\le  \sqrt{\frac{\sinh \pi |y|}{\pi |y|}} \le  \frac 32 e^{\frac{\pi}2 |y|}.
\end{equation}
Inserting this information in \eqref{nearnow}, and combining the resulting estimate with \eqref{away}, we conclude that there exists $C(n)>0$ such that for every $\xi\in \Rn$ and any $y\in \R$ one has 
\begin{equation}\label{secpi}
\left|\frac{1}{(2\pi |\xi|)^{-\frac{1}2 + i y}} J_{n +\frac{1}2 - i y}(2\pi |\xi|)\right| \le C e^{\frac{\pi}2 |y|}.
\end{equation}
Next, we show that for some $C = C(n)>0$ one has for every $\gamma\in \R$
\begin{equation}\label{invgamma2}
\left|\frac{1}{\G(\frac{n+1}2 -i y)}\right| \le C e^{\frac{\pi}2 |y|}.
\end{equation}
To see this we apply Legendre duplication formula, see e.g. (1.2.3) in \cite{Le}, to write 
\[
2^{n-2i y}\G(\frac{n+1}2 -i y)\G(\frac n2+1 -i y) = \sqrt \pi \G(n+1 - 2i y).
\]
Using the estimate $|\G(z)|\le |\G(\Re z)|$, this gives
\begin{align*}
& \left|\frac{1}{\G(\frac{n+1}2 -i y)}\right| \le \frac{2^n |\G(\frac n2+1 -i y)|}{\sqrt \pi |\G(n+1 - 2i y)|} \le \frac{2^n \G(\frac n2+1)}{\sqrt \pi |\G(n+1 - 2i y)|} \le C(n) e^{\frac{\pi}2 |y|},
\end{align*}
where in the last inequality we have used \eqref{invgamma}. This proves \eqref{invgamma2}. Next, we show that for every $\xi\in \Rn$ and every $y\in \R$
\begin{equation}\label{firpi}
\left|\int_{2\pi  |\xi|}^\infty t^{-\frac{1}2 - i y} J_{n+\frac 12 - i y}(t) dt\right|\le C (1+|y|) e^{\frac{\pi}2  |y|}.
\end{equation}
To prove \eqref{firpi} observe that, in view of \eqref{jnuinfty} there exists $R>0$ depending on $n$ such that for $|\xi|\ge R$ we have for $t\in [2\pi |\xi|,\infty)$ 
\[
\left|J_{n+\frac 12 - i y}(t) - \sqrt{\frac2{\pi
t}}\cos\left(t-\frac{n+1}2 \pi + i \frac{\pi}2 y\right)\right|\le t^{-3/2}.
\]
Since 
\begin{align*}
& \cos\left(t-\frac{n+1}2 \pi + i \frac{\pi}2 y\right)= \cos(t+i \frac{\pi}2 y)\cos(\frac{n+1}2 \pi) + \sin(t+i \frac{\pi}2y)\sin(\frac{n+1}2 \pi)
\\
& = \begin{cases} (-1)^{n+1}\sin(t+i \frac{\pi}2\gamma),\ \ \ \ n\ \text{even},
\\
(-1)^{n}\cos(t+i \frac{\pi}2\gamma),\ \ \ \ n\ \text{odd},
\end{cases}
\end{align*}
 with obvious meaning of the notation, we infer
\begin{align*}
& \left|\int_{2\pi  |\xi|}^\infty t^{-\frac{1}2 - i\gamma} J_{n+\frac 12 - i\gamma}(t) dt\right|\le \left|\sqrt{\frac2{\pi
}}\int_{2\pi  |\xi|}^\infty \frac{\begin{pmatrix}\sin(t+i \frac{\pi}2\gamma)\\ \cos(t+i \frac{\pi}2\gamma)\end{pmatrix}}{t^{1 + i\gamma}} dt\right| +  \int_{2\pi  |\xi|}^\infty t^{-2} dt.
\end{align*}
Keeping in mind that for $z = x+iy$ we have
\[
\cos z = \cos x \cosh y - i \sin x \sinh y,\ \ \ \ \sin z = \sin x \cosh y + i \cos x \sinh y,
\]
and that integrating by parts we obtain
\[
\left|\int_{2\pi  |\xi|}^\infty \frac{\begin{pmatrix}\sin t\\ \cos t\end{pmatrix}}{t^{1 + i y}} dt\right| \le C \frac{1+|y|}{|\xi|}\le C (1+|y|),
\]
we conclude that \eqref{firpi} holds when $|\xi|\ge R$. If instead $|\xi|\le R$, then we write 
\begin{align*}
& \int_{2\pi  |\xi|}^\infty t^{-\frac{1}2 - i y} J_{n+\frac 12 - i y}(t) dt = \int_{0}^\infty t^{-\frac{1}2 - i y} J_{n+\frac 12 - i y}(t) dt - \int_0^{2\pi  |\xi|} t^{-\frac{1}2 - i y} J_{n+\frac 12 - i y}(t) dt,
\end{align*}
and then use Lemma \ref{L:EH} and \eqref{near} to estimate
\begin{align*}
& \left|\int_{2\pi  |\xi|}^\infty t^{-\frac{1}2 - i y} J_{n+\frac 12 - i y}(t) dt\right| \le \frac{|\G(\frac n2 - i y)|}{\sqrt 2 \G(\frac n2 + 1)} + \frac{C(n) R^{n+2}}{|\G(n-i y + \frac 12)|} \le C(n) \left(1 + \frac{1}{|\G(n-i y + \frac 12)|}\right).
\end{align*}
Using again Legendre duplication formula, similarly to the proof of \eqref{invgamma2} we recognise
\[
\frac{1}{|\G(n-i y + \frac 12)|} \le C(n) e^{\frac{\pi}2 |y|}.
\]  
If we now use \eqref{secpi}, \eqref{invgamma2} and \eqref{firpi} in \eqref{K0}, we conclude that \eqref{K00} does hold. 
It follows that for every $f\in \mathscr S(\Rn)$ and any $y\in \R$ one has
\begin{equation}\label{firstendpoint}
||T_{-\frac{n-1}2 + i y} f||_{L^\infty(\Rn)} \le M_0(y) ||f||_{L^1(\Rn)}, 
\end{equation}
with $M_0(y) = C e^{\frac{3\pi}{2}|y|}$. 
By \cite[Theor. 4.1 on p. 205]{SW} we infer that there exists $M_s = M_{\theta(s)}>0$ such that \eqref{punchline} holds. This proves Theorem \ref{T:TSs}.

\end{proof}

From p. 209 in \cite{SW} we see that the constant $M_s$ is given by
\[
M_s = \exp\left[\frac{\sin \pi \theta(s)}2 \int_{\R} \left\{\frac{\log M_0(y)}{\cosh \pi y - \cos \pi \theta(s)} + \frac{\log M_1(y)}{\cosh \pi y + \cos \pi \theta(s)}\right\} dy\right],
\]
From \eqref{theta} we have
\[
\sin \pi \theta(s) = \sin(\pi - \frac{2\pi s}{n+1}) = \sin(\frac{2\pi s}{n+1}),\ \ \ \ \ \ \cos \pi \theta(s)  = - \cos(\frac{2\pi s}{n+1}).
\]
We thus find
\begin{equation}\label{Mss}
M_s  = \exp\left[\sin(\frac{2\pi s}{n+1}) \int_0^\infty \left\{\frac{\log M_0(y)}{\cosh \pi y + \cos(\frac{2\pi s}{n+1})} + \frac{\log M_1(y)}{\cosh \pi y - \cos(\frac{2\pi s}{n+1})}\right\} dy\right].
\end{equation}
Since $\cos(\frac{2\pi}{n+1}) \not= \pm 1$ for any $n\ge 2$, by Lebesgue dominated convergence we find
\begin{align}\label{Ms}
& \underset{s\to 1}{\lim}\ M_s = \exp\left[\sin(\frac{2\pi}{n+1}) \int_0^\infty \left\{\frac{\log M_0(y)}{\cosh \pi y + \cos(\frac{2\pi}{n+1})} + \frac{\log M_1(y)}{\cosh \pi y - \cos(\frac{2\pi}{n+1})}\right\} dy\right]
\\
& = M(n)\ <\ \infty.
\notag
\end{align}

Since the function $s\to \frac{2(n+1)}{n+1+2s}$ is decreasing on $(0,1)$, with range $(\frac{2(n+1)}{n+3},2)$, if now $1\le p \le \frac{2(n+1)}{n+3}$, then for any $s\in (0,1)$ we also have $1\le p \le \frac{2(n+1)}{n+1+2s}$. From the proof of Theorem \ref{T:TSs} we infer
for every $f\in \mathscr S(\Rn)$ one has
\begin{equation}\label{restr22}
\left(\int_{\Rn} |\hat f(\xi)|^2 A^{(s)}_1(\xi) d\xi\right)^{1/2} \le \sqrt{M_s}\ ||f||_{L^p(\Rn)},
\end{equation} 
with $M_s$ give by \eqref{Mss} above. Passing to the limit for $s\to 1$ in \eqref{restr22}, and using Proposition \ref{P:vague} and \eqref{Ms}, we conclude the celebrated Tomas-Stein theorem for the sphere
\[
\left(\frac{1}{\sigma_{n-1}}\int_{\so} |\hat f(\xi)|^2  d\sigma(\xi)\right)^{1/2} \le \sqrt{M(n)}\ ||f||_{L^p(\Rn)}.
\]


\section{Appendix: some classical material}\label{S:app}

In this section we collect some results needed in this paper. We recall the gamma function
\[
\G(z) = \int_0^\infty t^{z-1} e^{-t} dt,\ \ \ \ \ \ \ \ \ \Re z>0.
\]
Using the well-known formula $\G(z+1) = z \G(z)$ this function can be extended as a meromorphic function to the whole $\mathbb C$ having simple poles in $z = -k$, $k=0,1,...$, with residues $\operatorname{Res}(\G;-k) = \frac{(-1)^k}{k!}$. The reciprocal gamma function $1/\G(z)$ is an entire function with zeros at the negative integers. The following special values of such function will be useful subsequently, see e.g. formulas 6.1.29 and 6.1.31 on p. 256 in \cite{AS},
\begin{equation}\label{gammiy}
\frac{1}{|\G(\pm i y)|} = |y| \sqrt{\frac{\sinh(\pi |y|)}{\pi|y|}},\ \ \ \ \ \ \ \frac{1}{|\G(1\pm i y)|} = \sqrt{\frac{\sinh(\pi |y|)}{\pi|y|}}.
\end{equation}

Since this note is about the Fourier transform, Bessel functions (introduced and developed in the seminal work \cite{Bes}) play a pervasive role in it. We recall the series expansion of the Bessel function $J_\nu(z)$ for $|\arg z|<\pi$,
\begin{equation}\label{series}
J_\nu(z) = \left(\frac z2\right)^{\nu}\sum_{k=0}^\infty \frac{(-1)^k}{\G(\nu+k+1) k!} \left(\frac z2\right)^{2k},
\end{equation} 
see \cite{W}.
From \eqref{series} it is easy to obtain the following well-known formulas
\begin{equation}\label{rec}
\frac{d}{dz}\left[z^\nu J_\nu(z)\right] = z^\nu J_{\nu-1}(z),
\end{equation}
\begin{equation}\label{rec2}
- \frac{d}{dz}\left[z^{-\nu} J_\nu(z)\right] = z^{-\nu} J_{\nu+1}(z),
\end{equation}
and 
\begin{equation}\label{rec3}
\frac{2\nu}z J_\nu(z) - J_{\nu+1}(z) = J_{\nu-1}(z).
\end{equation}
For every $\nu\in \mathbb C$ such that $\Re\nu>-\dfrac12$ the function $J_\nu$ admits the following Poisson representation
\begin{equation}\label{extbessel}
J_\nu(z)
=\frac{1}{\G(\frac12)\G(\nu+\frac{1}{2})}\left(\frac{z}2\right)^\nu\int^1_{-1}
e^{izs}(1-s^2)^{\frac{2\nu-1}2}ds,
\end{equation}
where $z$ ranges in the complex plane cut along the negative real axis $(-\infty,0]$. When $\nu\in \no$ then we can take $z\in \mathbb C$. From \eqref{series}  one easily recognises that
\begin{equation}\label{bfbehzero}
J_\nu(z)\cong\frac{1}{\G(\nu+1)}\left(\frac{z}2\right)^\nu,\quad\text{as }z\to 0.
\end{equation}
Again from \eqref{extbessel} one sees that when $z\in \R$ and $z>0$, then for $\Re\nu>-\dfrac12$ one has
\begin{equation}\label{near}
|J_\nu(z)| \le \frac{\G(\Re \nu+\frac 12)}{|\G(\nu+\frac 12)|\ \G(\Re\nu +1)} \left(\frac{z}2\right)^{\Re\nu}.
\end{equation} 
This estimate is not useful for large $z>0$ since $J_\nu(z)$ decays at infinity with an oscillatory behaviour. When $\Re\nu>-\dfrac12$ the following result due to Hankel holds, see for instance \cite[Lemma 3.11]{SW}, or also (5.11.6) on p. 122 in \cite{Le}.
 One has for $0<\delta<\pi$
\begin{align}\label{jnuinfty} 
J_\nu(z)&=\sqrt{\frac2{\pi
z}}\cos\left(z-\frac{\pi\nu}2-\frac\pi4\right)+
O(z^{-\frac32})\\
&\quad\text{as }|z|\to\infty,\quad-\pi+\delta<\arg z<\pi-\delta.
\notag
\end{align}
In particular, 
\begin{equation}\label{jnuinfty2}
J_\nu(z) = O(z^{-1/2}),\ \ \ \ \ \text{as}\ z\to +\infty.
\end{equation}

The following beautiful formula can be found in 6.561.14 on p. 684 of \cite{GR}, or also (19) on p. 49 in vol.2 of \cite{EH}.

\begin{lemma}\label{L:EH}
Let $a>0$, $-\Re \nu-1<\Re\mu<\frac 12$. Then
\[
\int_0^\infty t^\mu J_\nu(at) dt = 2^\mu a^{-\mu-1} \frac{\G(\frac{\nu+\mu+1}2)}{\G(\frac{\nu-\mu+1}2)}.
\]
\end{lemma}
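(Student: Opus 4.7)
The identity is the classical Mellin transform of the Bessel function. My plan is: (i) scale out $a$, (ii) establish the case $a=1$ via Poisson's representation \eqref{extbessel} on a subregion of parameters, (iii) extend to the full range by analytic continuation. The substitution $u=at$ yields $\int_0^\infty t^\mu J_\nu(at)\,dt = a^{-\mu-1}\int_0^\infty u^\mu J_\nu(u)\,du$, so it suffices to treat $a=1$, and I first restrict to $\Re\nu > -\tfrac12$ and $-1 < \Re(\mu+\nu) < 0$ so all integrals converge absolutely after inserting a regulator $e^{-\ve u}$ with $\ve>0$.

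For the main computation, I would start from the real form of \eqref{extbessel}, namely
\[
J_\nu(u) = \frac{2(u/2)^\nu}{\G(\tfrac12)\G(\nu+\tfrac12)}\int_0^1 \cos(us)\,(1-s^2)^{\nu-\tfrac12}\,ds,
\]
apply Fubini against the weight $e^{-\ve u}$, and let $\ve\to 0^+$. The inner $u$-integral becomes $s^{-\mu-\nu-1}\G(\mu+\nu+1)\cos(\pi(\mu+\nu+1)/2)$, and the outer $s$-integral transforms under $t=s^2$ into the Beta function
\[
\int_0^1 s^{-\mu-\nu-1}(1-s^2)^{\nu-\tfrac12}\,ds \;=\; \tfrac12\,\frac{\G(-\tfrac{\mu+\nu}{2})\,\G(\nu+\tfrac12)}{\G(\tfrac{\nu-\mu+1}{2})}.
\]
Assembling and simplifying with (a) the reflection $\G(-z)\G(1+z) = -\pi/\sin(\pi z)$ at $z=(\mu+\nu)/2$, (b) the identity $\cos(\pi(\mu+\nu+1)/2) = -\sin(\pi(\mu+\nu)/2)$, and (c) Legendre's duplication $\G(\mu+\nu+1) = 2^{\mu+\nu}\pi^{-1/2}\G(\tfrac{\mu+\nu+1}{2})\G(\tfrac{\mu+\nu}{2}+1)$, the $\sin$'s and $\sqrt\pi$'s cancel and one recovers $2^\mu\,\G(\tfrac{\nu+\mu+1}{2})/\G(\tfrac{\nu-\mu+1}{2})$, as claimed.

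Finally, I would extend to the full range $-\Re\nu-1<\Re\mu<\tfrac12$ by analytic continuation. For fixed $\mu$ with $\Re\mu<\tfrac12$, both sides of the claimed identity are holomorphic in $\nu$ on $\{\Re\nu > -\Re\mu-1\}$: the left-hand side by \eqref{bfbehzero} (convergence at $0$) together with an integration by parts using the oscillatory asymptotic \eqref{jnuinfty} (conditional convergence at $\infty$), and the right-hand side because the poles of $\G((\nu+\mu+1)/2)$ sit at $\nu=-\mu-1-2k$, $k\ge 0$, on the boundary or outside the region. Since the two sides agree on the open subset $\Re\nu > -\tfrac12$, they agree throughout by the identity theorem. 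The main obstacle I anticipate is justifying the interchange of integrals and the $\ve\to 0^+$ limit: because $\int_0^\infty u^{\mu+\nu}\cos(us)\,du$ is only conditionally convergent, Fubini must be applied to the $\ve$-regularized integrand and dominated convergence then controls the outer $s$-integration via an integrable majorant of the form $(1-s)^{\Re\nu-1/2}\,s^{-\Re(\mu+\nu)-1}$ near the endpoints.
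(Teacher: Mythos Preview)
Your argument is correct. The paper itself gives no proof of this lemma: it is stated in the Appendix as a classical formula and simply referenced to 6.561.14 in Gradshteyn--Ryzhik and to formula (19) on p.~49 of Erd\'elyi et al. Your derivation via Poisson's representation \eqref{extbessel}, the Beta integral, and the reflection/duplication identities for $\Gamma$ is the standard self-contained route to this Mellin--Bessel integral, so you are supplying precisely what the paper deliberately omits. The technical points you flag (Fubini after the $e^{-\ve u}$ regularization, dominated convergence in the outer $s$-integral) are genuine but routine; one clean way to sidestep the Abel-limit issue on the left-hand side altogether is to shrink the initial region further to $\Re\mu<-\tfrac12$, where $\int_0^\infty u^\mu J_\nu(u)\,du$ converges absolutely by \eqref{jnuinfty2}, and then analytically continue in $\mu$ as well as in $\nu$.
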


We next recall the definition of the hypergeometric functions. The Pochammer's symbols are defined by
\begin{equation}\label{poke}
\alpha_0 = 1,\ \ \ \ \alpha_k \overset{def}{=} \frac{\G(\alpha + k)}{\G(\alpha)} = \alpha(\alpha+1)...(\alpha + k -1),\ \ \ \ \ \ \ \ \ \ k\in \mathbb N. 
\end{equation}
Notice that since the gamma function has a pole in $z=0$, we have
\[
0_k = \begin{cases}
1\ \ \ \ \ \ \text{if}\ k = 0
\\
0\ \ \ \ \ \ \text{for}\ k\ge 1.
\end{cases}
\]
\begin{definition}\label{D:hyper}
Let $p, q\in \mathbb N\cup\{0\}$ be such that $p\le q+1$, and let $\alpha_1,...,\alpha_p$ and $\beta_1,...,\beta_q$ be given parameters such that $-\beta_j\not\in \no$ for $j=1,...,q$. Given a number $z\in \mathbb C$, the power series
\[
_p F_q(\alpha_1,...,\alpha_p;\beta_1,...,\beta_q;z) = \sum_{k=0}^\infty \frac{(\alpha_1)_k . . . (\alpha_p)_k}{(\beta_1)_k . . . (\beta_q)_k} \frac{z^k}{k!}
\]
is called the \emph{generalized hypergeometric function}. When $p = 2$ and $q=1$, then the function $_2 F_1(\alpha_1,\alpha_2;\beta_1;z)$ is the \emph{Gauss' hypergeometric function}, and it is usually denoted by $F(\alpha_1,\alpha_2;\beta_1;z)$.
\end{definition}

Using the ratio test one easily verifies that the radius of convergence of the above hypergeometric series is $\infty$ when $p\le q$, whereas it equals $1$ when $p = q+1$. Thus for instance it is $1$ for Gauss' hypergeometric function $F(\alpha_1,\alpha_2;\beta_1;z)$.  For later reference we record the following facts that follow easily from Definition \ref{D:hyper}:
\begin{equation}\label{zeroF}
_p F_q(0,\alpha_2,...,\alpha_p;\beta_1,...,\beta_q;z) = 1,
\end{equation}
and (see also p. 275 in \cite{Le})
\begin{equation}\label{fs6}
F(\alpha,\beta;\beta;-z) =\ _1F_0(\alpha;-z) = (1+z)^{-\alpha}.
\end{equation}

The following result plays a key role in this work. The interested reader can find it in formula 2.12.4.16 on p. 178 in \cite{PBM}.

\begin{lemma}\label{L:critical}
Let $c>0$, $\Re \beta>0$, $\Re(\alpha+2\beta)<\frac 72$. Then
\begin{align}\label{magic}
& \int_1^\infty \rho^{\alpha-1} (\rho^2 - 1)^{\beta-1} J_\nu(c \rho) d\rho 
\\
& = \frac{c^\nu\ \G(\beta) \G(1-\beta-\frac{\alpha+\nu}2)}{2^{\nu+1} \ \G(\nu+1) \G(1-\frac{\alpha+\nu}2)} \ _1F_2(\frac{\alpha+\nu}2; \nu+1,\frac{\alpha+\nu}2 + \beta; - (\frac c2)^2)
\notag\\
& + \frac{2^{\alpha+2\beta-3} c^{2-\alpha-2\beta} \G(\beta+ \frac{\alpha+\nu}2 -1)}{\G(2-\beta+\frac{\nu-\alpha}2)}  \ _1F_2(1-\beta; 2-\beta - \frac{\alpha+\nu}2,2-\beta+ \frac{\nu-\alpha}2; - (\frac c2)^2).
\notag
\end{align}
\end{lemma}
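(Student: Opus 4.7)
The plan is to reconstruct this table integral (formula 2.12.4.16 in \cite{PBM}) via the classical Mellin--Barnes technique. The shape of the right-hand side, two $_1F_2$ series with prefactors scaling as $c^\nu$ and $c^{2-\alpha-2\beta}$ respectively, is the tell-tale signature of a Barnes contour integral whose residues split into two interlocked pole sequences: one from a ``Bessel'' Gamma and one from a ``Beta'' Gamma.

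I would begin by inserting the Mellin inversion of the Bessel function,
\[
J_\nu(c\rho)=\frac{1}{2\pi i}\int_L \frac{2^{s-1}\G\!\left(\tfrac{\nu+s}{2}\right)}{\G\!\left(\tfrac{\nu-s}{2}+1\right)}(c\rho)^{-s}\,ds,
\]
valid on any vertical line $L=\{\Re s=\sigma\}$ with $-\Re\nu<\sigma<\tfrac32$, and interchange the order of integration. The inner $\rho$-integral reduces, via $u=\rho^{-2}$, to a Beta function
\[
\int_1^\infty \rho^{\alpha-1-s}(\rho^2-1)^{\beta-1}d\rho=\frac{\G(\beta)\,\G\!\left(\tfrac{s-\alpha}{2}-\beta+1\right)}{2\,\G\!\left(\tfrac{s-\alpha}{2}+1\right)},
\]
convergent for $\sigma>\Re(\alpha+2\beta)-2$. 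One then closes the Barnes contour to the left, picking up the two pole sequences $s=-\nu-2k$ (from $\G(\tfrac{\nu+s}{2})$) and $s=\alpha+2\beta-2-2k$ (from $\G(\tfrac{s-\alpha}{2}-\beta+1)$), $k\in \no$. Using $\operatorname{Res}_{s=-\nu-2k}\G(\tfrac{\nu+s}{2})=2(-1)^k/k!$, together with the Pochhammer identities $\G(a+k)=\G(a)(a)_k$ and $\G(a-k)=(-1)^k\G(a)/(1-a)_k$, the first sequence reassembles into the first $_1F_2$ of \eqref{magic}, with the $c^\nu$ prefactor produced by evaluating $c^{-s}$ at $s=-\nu$, while the second sequence reassembles into the second $_1F_2$, with the $c^{2-\alpha-2\beta}$ prefactor produced at $s=\alpha+2\beta-2$.

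The main technical obstacle is justifying the closure of the contour: the original integral is only conditionally convergent at $\rho=\infty$ thanks to the oscillation of $J_\nu$, and the assumption $\Re(\alpha+2\beta)<\tfrac72$ is precisely what makes the integrand $O(\rho^{\Re(\alpha+2\beta)-7/2})$ oscillatorily integrable there. Accordingly, one performs the Mellin--Barnes manipulation first in a tighter subregion of the parameter space where Stirling's formula on vertical strips gives absolute convergence of the contour integral and justifies the interchange, and then extends the identity to the full stated hypotheses by analytic continuation in $(\alpha,\beta,\nu)$. Beyond this, the remaining work is careful bookkeeping of powers of $2$, of the three $(-1)^k$ factors arising from the residue and from the two $\G(a-k)$ shifts, and of the $c^{-s}$ evaluations at the base points, so that the prefactors in \eqref{magic} are reproduced exactly rather than up to a spurious constant.
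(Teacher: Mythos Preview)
Your Mellin--Barnes derivation is sound and the key steps check out: the Beta evaluation of the $\rho$-integral is correct, the two pole sequences $s=-\nu-2k$ and $s=\alpha+2\beta-2-2k$ are exactly what produce the two $_1F_2$ blocks, and your observation that the hypothesis $\Re(\alpha+2\beta)<\tfrac72$ is what allows the Mellin contour $\{\Re s=\sigma\}$ to sit simultaneously in the Bessel strip $-\Re\nu<\sigma<\tfrac32$ and above the Beta threshold $\sigma>\Re(\alpha+2\beta)-2$ is the right structural explanation for that condition.

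However, you should know that the paper does not actually prove this lemma: it is quoted verbatim as formula 2.12.4.16 from the tables of Prudnikov--Brychkov--Marichev, with no argument supplied. So there is no ``paper's own proof'' to compare against; you have effectively supplied one where the authors chose to rely on a reference. What your approach buys is self-containment and a transparent explanation of where each piece of \eqref{magic} comes from, at the cost of the contour-closing justification you already flagged (Stirling on vertical strips plus analytic continuation in the parameters). If you intend to include this as a genuine proof rather than a sketch, you will need to spell out the decay estimate on the shifted contours and handle the generic case where the two pole families are disjoint, noting that coalescing poles are removed by continuity.
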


In order to fully exploit Lemma \ref{L:critical}, we next derive a useful representation formula of a certain hypergeometric function $_1 F_2$ in terms of an integral involving a Bessel function. We stress that, although for convenience we have used the same letters, the parameters $\alpha, \nu$ in the statement of the next result are not the same as those in Lemma \ref{L:critical}.

\begin{lemma}\label{L:hyper12}
Let $a, c>0$ and $\Re(\alpha+\nu)>0$. Then
\[
\ _1F_2\left(\frac{\alpha+\nu}2;\frac{\alpha+\nu}2+1,\nu+1; - (\frac{ac}{2})^2\right) = \frac{(\alpha+\nu) 2^{\nu}\G(\nu+1)}{a^{\alpha + \nu} c^\nu}\int_0^a t^{\alpha-1} J_\nu(ct) dt.
\]
\end{lemma}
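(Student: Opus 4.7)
The plan is to expand $J_\nu(ct)$ as a power series via \eqref{series}, integrate term-by-term on $[0,a]$, and then recognize the resulting series as the hypergeometric function $_1F_2$ by means of an elementary Pochhammer identity.

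First, from \eqref{series} I would write
\[
t^{\alpha-1} J_\nu(ct) = \left(\frac c2\right)^\nu \sum_{k=0}^\infty \frac{(-1)^k}{\Gamma(\nu+k+1)\, k!}\left(\frac c2\right)^{2k} t^{\alpha+\nu+2k-1}.
\]
Since the series defining $J_\nu$ has infinite radius of convergence, the series above converges uniformly on $[0,a]$, and the condition $\Re(\alpha+\nu)>0$ guarantees integrability of each term at $t=0$. Term-by-term integration therefore gives
\[
\int_0^a t^{\alpha-1} J_\nu(ct)\, dt = \sum_{k=0}^\infty \frac{(-1)^k}{\Gamma(\nu+k+1)\, k!}\left(\frac c2\right)^{\nu+2k} \frac{a^{\alpha+\nu+2k}}{\alpha+\nu+2k}.
\]

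Multiplying by $\dfrac{(\alpha+\nu)\, 2^{\nu}\Gamma(\nu+1)}{a^{\alpha+\nu} c^\nu}$ and pulling $a$ and $c$ into the $2k$-th power factor, the right-hand side of the claimed identity becomes
\[
\sum_{k=0}^\infty \frac{(\alpha+\nu)\,\Gamma(\nu+1)}{\Gamma(\nu+k+1)\,(\alpha+\nu+2k)} \cdot \frac{(-1)^k}{k!}\left(\frac{ac}{2}\right)^{2k}.
\]
By Definition \ref{D:hyper}, the hypergeometric function on the left-hand side equals
\[
\sum_{k=0}^\infty \frac{(\frac{\alpha+\nu}2)_k}{(\frac{\alpha+\nu}2+1)_k\,(\nu+1)_k}\cdot \frac{(-1)^k}{k!}\left(\frac{ac}{2}\right)^{2k}.
\]
So the proof reduces to verifying, coefficient by coefficient, the identity
\[
\frac{(\alpha+\nu)\,\Gamma(\nu+1)}{\Gamma(\nu+k+1)\,(\alpha+\nu+2k)} = \frac{(\frac{\alpha+\nu}2)_k}{(\frac{\alpha+\nu}2+1)_k\,(\nu+1)_k}.
\]
Using $(\nu+1)_k = \Gamma(\nu+k+1)/\Gamma(\nu+1)$, the factor $\Gamma(\nu+1)/\Gamma(\nu+k+1)$ matches $1/(\nu+1)_k$. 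Setting $\beta = \frac{\alpha+\nu}2$, one is left with the purely telescopic identity
\[
\frac{\beta_k}{(\beta+1)_k} = \frac{\beta(\beta+1)\cdots(\beta+k-1)}{(\beta+1)(\beta+2)\cdots(\beta+k)} = \frac{\beta}{\beta+k} = \frac{\alpha+\nu}{\alpha+\nu+2k},
\]
which is immediate from the definition \eqref{poke} of the Pochhammer symbol.

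There is no serious obstacle here: the only points to be careful about are the termwise integration (which is legitimate thanks to uniform convergence on $[0,a]$ and to $\Re(\alpha+\nu)>0$, which ensures convergence at the endpoint $t=0$) and the bookkeeping of the Pochhammer symbols. The whole argument is a direct computation, and once the series are lined up, the Pochhammer identity $\beta_k/(\beta+1)_k = \beta/(\beta+k)$ produces the exact matching of coefficients.
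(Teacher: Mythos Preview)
Your proof is correct and follows essentially the same approach as the paper's: expand $J_\nu$ via its power series \eqref{series}, integrate term by term (using $\Re(\alpha+\nu)>0$ for convergence at $t=0$), and identify the resulting coefficients with those of the ${}_1F_2$ series by the Pochhammer telescoping $\beta_k/(\beta+1)_k=\beta/(\beta+k)$. The paper merely inserts the preliminary substitutions $\tau=ct$ and $z=(\tau/2)^2$ before matching coefficients, and leaves the final Pochhammer bookkeeping to the reader; your version is a slightly more direct write-up of the same computation.
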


\begin{proof}

Performing the change of variable $\tau = ct$ and using \eqref{series} we find
\begin{align*}
& \int_0^a t^{\alpha-1} J_\nu(ct) dt = c^{-\alpha}\int_0^{ac} \tau^{\alpha-1} J_\nu(\tau) d\tau
\\
& = 2^\alpha c^{-\alpha} \sum_{k=0}^\infty \frac{(-1)^k}{\G(\nu+k+1) k!} \int_0^{ac} \left(\frac \tau 2\right)^{2(k+\frac{\alpha+ \nu}2)} \frac{d\tau}\tau.
\end{align*}
The change of variable $z = \left(\frac \tau 2\right)^2$, for which $\frac{dz}z = 2 \frac{d\tau}\tau$, gives
\begin{align*}
&  \int_0^a t^{\alpha-1} J_\nu(ct) dt = 2^{\alpha-1} c^{-\alpha} \sum_{k=0}^\infty \frac{(-1)^k}{\G(\nu+k+1) k!} \int_0^{\left(\frac{ac}2\right)^2} z^{k+\frac{\alpha+ \nu}2} \frac{dz}z 
\\
& =  2^{\alpha-1} c^{-\alpha} \left(\frac{ac}2\right)^{\alpha+ \nu} \sum_{k=0}^\infty \frac{(-1)^k}{(\frac{\alpha+ \nu}2 + k)\G(\nu+k+1) k!} \left(\frac{ac}2\right)^{2k}
\\
& = \frac{a^{\alpha + \nu}c^\nu}{2^{\nu+1}}  \sum_{k=0}^\infty \frac{(\frac{\alpha+\nu}2)_k}{(\frac{\alpha+\nu}2 + 1)_k (\nu+1)_k  k!} \left(\frac{ac}2\right)^{2k} \frac{\G(\frac{\alpha+\nu}2)}{(\frac{\alpha+ \nu}2 + k)\G(\frac{\alpha+\nu}2+k)}\frac{\G(\frac{\alpha+\nu}2+k+1)}{\G(\frac{\alpha+\nu}2+1)}\frac{\G(\nu+k+1)}{\G(\nu+1)\G(\nu+k+1)}
\\
& =  \frac{a^{\alpha + \nu} c^\nu}{(\alpha+\nu) 2^{\nu}\G(\nu+1)}\ _1F_2\left(\frac{\alpha+\nu}2;\frac{\alpha+\nu}2+1,\nu+1; - (\frac{ac}{2})^2\right),
\end{align*}
where the details of the last equality are left to the reader.

\end{proof}

\bibliographystyle{amsplain}

\end{document}